\documentclass[11pt]{amsart}
\usepackage{graphicx}
\usepackage[centertags]{amsmath}
\usepackage{amsfonts}
\usepackage{amssymb}
\usepackage{amsthm}
\usepackage{newlfont}
\usepackage[T2A]{fontenc}
\usepackage[utf8]{inputenc}
\usepackage[english]{babel}
\usepackage{float}
\newtheorem{theorem}{Theorem}[section]
\newtheorem{corollary}[theorem]{Corollary}

\newtheorem{prop}[theorem]{Proposition}
\theoremstyle{definition}
\newtheorem{defn}[theorem]{Definition}
\theoremstyle{remark}
\newtheorem{rem}[theorem]{Remark}

\newcommand{\id}{\textrm{id}}

\newcommand{\di}{\textrm{d}}
\title[Four-dimensional almost-Einstein manifolds]{Four-dimensional almost-Einstein manifolds with circulant structures}

\author{Iva Dokuzova, Georgi Dzhelepov, Dimitar Razpopov}

\address{Iva Dokuzova\\ Department of Algebra and Geometry \\ Faculty of Mathematics and Informatics\\ University of Plovdiv Paisii Hilendarski\\ 236 Bulgaria Blvd., 4027 Plovdiv, Bulgaria}
\email{dokuzova@uni-plovdiv.bg}
\address{Georgi Dzhelepov\\ Department of Mathematics and Informatics\\ Faculty of Economics\\ Agricultural University of Plovdiv\\ 12 Mendeleev Blvd., 4000 Plovdiv, Bulgaria}
\email{dzhelepov@abv.bg}
\address{Dimitar Razpopov\\ Department of Mathematics and Informatics\\ Faculty of Economics\\ Agricultural University of Plovdiv\\ 12 Mendeleev Blvd., 4000 Plovdiv, Bulgaria}
\email{razpopov@au-plovdiv.bg}

\begin{document}

%
\begin{abstract}
This paper considers a four-dimensional Riemannian manifold with an additional circulant tensor structure $Q$. We prove that such a manifold is almost-Einstein if the curvature tensor preserves the action of the structure on the last two arguments. 
Additionally, we examine an associated Riemannian product manifold $(M, g, Q)$ where $P=Q^{2}$, deriving explicit expressions for the curvature tensor in terms of the metric and the product structure. The sectional and Ricci curvatures of these manifolds are determined using special bases generated by the images of a given vector under the structure $Q$. 
Finally, we construct a non-trivial example of a four-dimensional locally product manifold by embedding a four-dimensional hypersurface into a ten-dimensional Euclidean space.
\end{abstract}

\subjclass[2020]{53C15,
53B20, 15B05, 53A07}

\keywords {almost product structure, special Riemannian manifolds, higher dimensional surfaces}

\maketitle

\section{Introduction}
The foundations of the study of Riemannian manifolds with an almost-product structure $P$ were laid by K.~Yano in \cite{yano}. Subsequently, the classification of Riemannian almost-product manifolds with respect to the covariant derivative of $P$ was obtained by A.~M.~Naveira in \cite{nav}, while
such manifolds with a zero trace of $P$ were classified by M.~Staikova and K.~Gribachev in \cite{sta}.

The most restricted class within these framework is the class of locally product Riemannian manifolds (or Riemannian $P$-manifolds), which consists of manifolds with a parallel structure $P$. The metric tensor of such a manifold can be decomposed into two independent metric tensors, making the manifold a product of two submanifolds. A more general class within this classification is the class of Riemannian product manifolds, for which the Nijenhuis tensor vanishes.  The curvature properties of the Riemannian product manifolds have been studied, for instance, in \cite{asmari, esra, gribacheva2, griba-mek, jack, pusic, sta2}. Furthermore, it was proved by D. Gribacheva and D. Mekerov in \cite{griba-mek2} that every four-dimensional Riemannian almost-product manifold whose curvature tensor is a Riemannian $P$-tensor is an almost-Einstein manifold, with its curvature tensor expressed in terms of the metric and the almost-product structure. 

A four-dimensional differentiable manifold $(M, g, Q)$ equipped with an additional circulant structure $Q$ and a Riemannian metric $g$ is defined in \cite{raz}. The geometric properties of such a manifold are investigated in \cite{dokuzova}, \cite{dokuzova1}, \cite{dokuzova2}, and \cite{razpopov2}. In the present paper, we consider a Riemannian manifold $(M, g, Q)$ whose curvature tensor preserves the action of $Q$ on the last two arguments. We show that such a manifold is necessarily almost-Einstein. Furthermore, we find an explicit expression for the curvature tensor in terms of the metric $g$ and the structure $Q$. We obtain the Ricci curvatures in the directions of the basis vectors determined by $Q$-bases in the tangent spaces of the manifold.  We also express the sectional curvatures of special 2-planes determined by $Q$-bases in terms of the scalar curvatures and the angles between the basis vectors.
Additionally, we study the curvature properties of the associated Riemannian product manifold $(M, g, P)$, where $P=Q^{2}$. Such manifolds form a class of almost-Einstein manifolds when their curvature tensors are Riemannian $P$-tensors (i.e., the action of $P$ on the last two arguments is preserved).
In the final section, we construct a non-trivial example of a locally product Riemannian manifold $(M, g, P)$. 

\section{Preliminaries}\label{sec:1}
The four-dimensional manifolds $(M, g, Q)$ and $(M, g, P)$ were introduced in \cite{raz} and \cite{dokuzova}.  In this section, we provide the basic facts regarding them.

 Let $M$ be a $4$-dimensional Riemannian manifold equipped with a tensor structure $Q$ at each point $p$ on $M$. Assume that $Q$ has the following component matrix:
\begin{equation}\label{Q}
    (Q_{j}^{k})=\begin{pmatrix}
      0 & 1 & 0 & 0\\
      0 & 0 & 1 & 0\\
      0 & 0 & 0 & 1\\
      1 & 0 & 0 & 0\\
    \end{pmatrix}.
\end{equation}
Then, $Q$ obey relations:
\begin{equation}\label{Q4}
    Q^{4}=\id,\quad Q^{2}\neq\pm\id.
\end{equation}
We assume that the metric $g$ and the structure $Q$ satisfy equality
\begin{equation}\label{isometry}
 g(Qx, Qy)=g(x,y),\quad x, y\in \mathfrak{X}(M).
\end{equation}
Throughout this work, $x, y, z, v$ will denote arbitrary elements of the algebra of the smooth vector fields on $M$ or vectors in the tangent space $T_{p}M$. The Einstein summation convention is used, with the range of the summation indices being always $\{1, 2, 3, 4\}$.

Due to \eqref{Q4} and \eqref{isometry}, the structure $P=Q^{2}$ is an almost-product structure and satisfies
\begin{equation*}
    g(Px, Py)=g(x, y).
\end{equation*}
Bearing in mind conditions \eqref{Q} and \eqref{isometry}, the structures $P$ and $g$ have local coordinates given by the following circulant matrices:
\begin{equation}\label{S}
    (P_{i}^{j})=\begin{pmatrix}
      0 & 0 & 1 & 0\\
      0 & 0 &  0 & 1 \\
      1 & 0 & 0 & 0\\
      0 & 1 & 0 & 0\\
    \end{pmatrix},
\end{equation}
\begin{equation}\label{metricg}
    (g_{ij})=\begin{pmatrix}
      A & B & C & B \\
      B & A & B & C\\
      C & B & A & B\\
      B & C & B & A\\
    \end{pmatrix}.
\end{equation}
We assume that $A$, $B$, and $C$ are smooth functions of an arbitrary point $p(x^{1}, x^{2},
x^{3}, x^{4})$ on $M$, satisfying $A >C> B>0$ to ensure that $g$ is a positive definite metric.

We consider the tensor $\tilde{g}(x,y)=g(Px,y)$. Its component matrix takes the form
\begin{equation}\label{metricJ}
    (\tilde{g}_{ij})=\begin{pmatrix}
      C & B & A & B \\
      B & C & B & A\\
      A & B & C & B\\
      B & A & B & C\\
    \end{pmatrix}
\end{equation}
and is symmetric, i.e., $\tilde{g}(x,y)=\tilde{g}(y,x)$.

 The associated manifold $(M, g, P)$ is a Riemannian product manifold, and the tensor $\tilde{g}$ is an indefinite metric under the conditions $A >C> B>0$.

We now present well-known curvature objects on a Riemannian manifold. 
The curvature tensor $R$ is defined by $$R(x, y)z=\nabla_{x}\nabla_{y}z-\nabla_{y}\nabla_{x}z-\nabla_{[x,y]}z,$$ where $\nabla$ is the Levi-Civita connection of $g$.
The tensor of type $(0, 4)$ associated with $R$ is given by
\begin{equation*}
    R(x, y, z, u)=g(R(x, y)z,u).
\end{equation*}

The Ricci tensor $\rho$ and the scalar curvature $\tau$ with respect to $g$ are defined by:
\begin{equation}\label{def-rho}
    \rho(y,z)=g^{ij}R(e_{i}, y, z, e_{j}),\
    \tau=g^{ij}\rho(e_{i}, e_{j}).
\end{equation}

The Ricci curvature with respect to $g$ in the direction of a nonzero vector $x$ is given by \begin{equation}\label{Ricicurv}
    r(x)=\frac{\rho(x,x)}{g(x,x)}.
\end{equation}

The curvature of a non-degenerate $2$-plane $\{x, y\}$  is
\begin{equation}\label{3.3}
    \kappa(x,y)=\frac{R(x, y, x, y)}{g(x, x)g(y, y)-g^{2}(x, y)}.
\end{equation}

For Riemannian almost-product manifolds, Naveira's classification (\cite{nav}) and Staikova-Gribachev's classification (\cite{sta}) are valid. The class  $W_{0}$ of so-called locally product Riemannian manifolds is defined by
\begin{equation}\label{keler} \nabla P=0.\end{equation}
This class is common to all classes within these classifications.

Now, we recall the conditions for the parallelism of the structures $Q$ and $P$, obtained in \cite{raz} and \cite{dokuzova}, respectively.
\begin{itemize}
\item The structure $Q$ on $(M, g, Q)$ is parallel with respect to $\nabla$ if and only if the following conditions hold:
\begin{equation}\label{paralel-f4}
\begin{split}
A_{3}&=C_{1},\ A_{1}=C_{3},\ A_{2}=C_{4},\ A_{4}=C_{2},\\
2B_{4}&=2B_{2}=C_{1}+C_{3},\ 2B_{3}=2B_{1}=C_{2}+C_{4}.
\end{split}
\end{equation}
Here $A_{i}$, $B_{i}$, and $C_{i}$ are partial derivatives of the functions $A$, $B$, and $C$.
\item The structure $P=Q^{2}$ on $(M, g, P)$ is parallel with respect to $\nabla$ if and only if 
\begin{equation}\label{w0}
A_{3}=C_{1},\ A_{1}=C_{3},\  A_{2}=C_{4},\ B_{3}=B_{1},\ A_{4}=C_{2},\
B_{4}=B_{2}.
\end{equation}
\end{itemize}
\begin{rem} We note that if $\nabla Q=0$, then \eqref{keler} holds, but the converse is not true.
\end{rem}

It is known that a Riemannian manifold $(M, g)$ is called almost-Einstein if its Ricci tensor $\rho$ is represented as a linear combination of the metrics $g$ and $\tilde{g}$ with smooth functions on $M$ as coefficients, i.e.,
\begin{equation}\label{AE}\rho(x, y) = \alpha g(x, y)+\beta \tilde{g}(x, y).\end{equation}

\section{Almost-Einstein manifolds $(M, g, P)$}
In this section, we consider a manifold $(M, g, P)$ whose curvature tensor $R$ is a Riemannian $P$-tensor, meaning that the action of $P$ on the last two arguments is preserved. 

For the curvature tensor and the structure $P$, condition \eqref{keler} implies:
\begin{equation}\label{3.2-P} R(x,y,Pz,Pu)=R(x,y,z,u).\end{equation}
This identity defines a class of manifolds $(M, g, P)$ that is more general than the class of manifolds satisfying \eqref{keler}. Condition \eqref{3.2-P} determines $R$ as a Riemannian $P$-tensor \cite{sta2}.

We establish the following results.
\begin{prop}\label{svoistvo-R-P}
Identity \eqref{3.2-P} for the curvature tensor $R$ on the manifold $(M, g, P)$ is equivalent to the following conditions for its local components:
\begin{equation}\label{R-loc-P}
\begin{array}{ll}
R_{1212}&=R_{3434}=R_{2323}=R_{1414}=R_{1234}=R_{4123},\\
R_{2114}&=R_{4134}=R_{1223}=R_{2334},\\
R_{1213}&=R_{2324}=R_{1424}=R_{3134}=R_{1313}=R_{2424}=R_{1224}\\&=R_{3123}=R_{3114}=R_{4234}=0.
\end{array}
\end{equation}
\end{prop}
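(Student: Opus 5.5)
Since $P$ is a constant-coefficient tensor in the chosen local coordinates, with matrix \eqref{S}, it acts on the coordinate basis by the permutation $\sigma=(1\,3)(2\,4)$, i.e. $Pe_{1}=e_{3}$, $Pe_{2}=e_{4}$, $Pe_{3}=e_{1}$, $Pe_{4}=e_{2}$. By multilinearity, identity \eqref{3.2-P} is equivalent to the component identities
\begin{equation*}
R_{ij\sigma(k)\sigma(l)}=R_{ijkl}\quad\text{for all } i,j,k,l\in\{1,2,3,4\}.
\end{equation*}
Using the pair symmetry $R_{ijkl}=R_{klij}$, this also gives $R_{\sigma(i)\sigma(j)kl}=R_{ijkl}$. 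Hence $R$ is invariant under applying $\sigma$ to either pair separately, and so also to both pairs. The plan is to write out the $20$ algebraically independent components of a $4$-dimensional curvature tensor. Their index pairs are drawn from $\{12,13,14,23,24,34\}$, and we act on them with $\sigma$ on each pair, together with the antisymmetries and the pair symmetry.

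\textbf{Step 1: orbits of $\sigma$ on index pairs.} We have $12\mapsto 34$, $14\mapsto 32=-23$, $13\mapsto 31=-13$ and $24\mapsto 42=-24$. Any component with a pair from $\{13,24\}$ therefore equals its own negative and vanishes. This gives $R_{1313}=R_{2424}=R_{1213}=R_{3134}=R_{1424}=R_{2324}=R_{1224}$-type zeros, after rewriting each listed index pattern with the antisymmetries. Components built only from pairs in $\{12,34\}$ collapse to $R_{1212}=R_{1234}=R_{3434}$. Components built only from pairs in $\{14,23\}$ give $R_{1414}=R_{2323}=-R_{1423}=R_{4123}$. Mixed components with one pair in $\{12,34\}$ and one in $\{14,23\}$ give the chain $R_{2114}=R_{4134}=R_{1223}=R_{2334}$, up to the sign bookkeeping from $14\mapsto -23$.

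\textbf{Step 2: use the first Bianchi identity.} The relation $R_{1234}+R_{1342}+R_{1423}=0$ has middle term zero by Step 1, so $R_{1234}=-R_{1423}=R_{4123}$. This links the two diagonal families into the single first line of \eqref{R-loc-P}. The remaining listed zeros, such as $R_{3123}$, $R_{3114}$ and $R_{4234}$, contain a pair $13$ or $24$ and so are already covered by Step 1. Some zeros need instead the invariance rule combined with a Bianchi identity. I would check each one by the same two rules.

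\textbf{Step 3: converse.} Assume \eqref{R-loc-P} holds. For every quadruple $(i,j,k,l)$, reduce $R_{ij\sigma(k)\sigma(l)}$ and $R_{ijkl}$ to the standard representatives using the curvature symmetries, and check that they coincide; multilinearity then gives \eqref{3.2-P}.

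The main obstacle is the sign bookkeeping: the relation $\sigma(14)=-23$ introduces a sign, and one must verify that the mixed chain and the Bianchi-linked equalities come out with exactly the signs stated, rather than with opposite signs. I would handle this by a careful table of all $20$ components.
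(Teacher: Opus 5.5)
Your proposal is correct and follows essentially the same route as the paper: you rewrite \eqref{3.2-P} as $R_{ij\sigma(k)\sigma(l)}=R_{ijkl}$ with $\sigma=(1\,3)(2\,4)$, reduce using the curvature symmetries and the single Bianchi relation $R_{1234}+R_{1342}+R_{1423}=0$, and check the converse componentwise; your orbit-on-pairs organization is a cleaner bookkeeping of the same computation. Your hedges are unnecessary: every listed zero already contains a pair $13$ or $24$ (so none needs Bianchi), the mixed chain holds with exactly the stated signs (e.g.\ $R_{2114}=R_{2132}=R_{1223}$), and in the converse the one unlisted component is $R_{1324}=R_{1234}+R_{1423}=0$ by Bianchi.
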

\begin{proof}
The local form of \eqref{3.2-P} is given by
\begin{equation}\label{loc-P}
R_{ijab}P_{k}^{a}P_{h}^{b}=R_{ijkh}.\end{equation}
Using \eqref{S} and applying the symmetries of $R$ and the first Bianchi identity to the components of $R$, we obtain \eqref{R-loc-P}.

Conversely, \eqref{loc-P} follows from \eqref{S} and \eqref{R-loc-P}, so \eqref{3.2-P} holds true.
\end{proof}

\begin{prop}\label{the1-P}
If identity \eqref{3.2-P} holds true for a manifold $(M, g, P)$, then the components of the Ricci tensor $\rho$ obey the relations:
\begin{equation}\label{system-rho-P}
\rho_{11}=\rho_{22}=\rho_{33}=\rho_{44},\quad \rho_{13}=\rho_{24},\quad \rho_{12}=\rho_{23}=\rho_{34}=\rho_{14}.
\end{equation}
\end{prop}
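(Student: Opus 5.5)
The cleanest route is to avoid the explicit components in \eqref{R-loc-P} and work invariantly. The relations \eqref{system-rho-P} say exactly that $\rho(Px,Py)=\rho(x,y)$ and $\rho(Qx,Qy)=\rho(x,y)$ on coordinate vectors; the $Q$-invariance is the natural target, since $Q$ permutes $e_1\to e_4\to e_3\to e_2\to e_1$ up to relabeling. So I would split the argument into two parts. First I would show $P$-invariance of $\rho$. Then I would check the remaining equalities, which $P$-invariance alone does not give, directly from \eqref{R-loc-P} together with the circulant form \eqref{metricg} of $g^{ij}$.

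\emph{Step 1: $\rho(Py,Pz)=\rho(y,z)$.} From \eqref{3.2-P} and the pair symmetry of $R$, also $R(Px,Py,z,u)=R(x,y,z,u)$. Since $P$ is a $g$-isometry with $P^2=\id$, the trace in \eqref{def-rho} can be taken in the basis $\{Pe_i\}$:
\[
\rho(Py,Pz)=g^{ij}R(Pe_i,Py,Pz,Pe_j)=g^{ij}R(e_i,y,z,e_j)=\rho(y,z),
\]
where both pairs of $P$'s are removed using \eqref{3.2-P}. In components, using \eqref{S}, this gives $\rho_{11}=\rho_{33}$, $\rho_{22}=\rho_{44}$, $\rho_{12}=\rho_{34}$, $\rho_{14}=\rho_{32}$. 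The relation $\rho_{13}=\rho_{31}$ is trivial, so $\rho_{13}=\rho_{24}$ is not yet obtained.

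\emph{Step 2: the remaining relations $\rho_{11}=\rho_{22}$, $\rho_{12}=\rho_{14}$, $\rho_{13}=\rho_{24}$.} I would compute $\rho_{yz}=g^{ij}R_{iyzj}$ explicitly. The inverse of the circulant matrix \eqref{metricg} is again circulant of the same shape, with entries $a,b,c$ in place of $A,B,C$. Substituting the reductions \eqref{R-loc-P}, every nonzero component becomes one of two essential values: $K:=R_{1212}$ and $L:=R_{2114}$, up to sign. Expanding, I expect expressions of the form
\begin{align*}
\rho_{11}&=(\text{linear combination of } a,b,c)\cdot K+(\ldots)\cdot L,\\
\rho_{22}&=(\text{same combination}),
\end{align*}
and similarly for the off-diagonal entries. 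Equality of each pair should then follow from the circulant symmetry of $g^{ij}$.

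The main obstacle is this bookkeeping step. The signs coming from antisymmetry, for example $R_{1414}=R_{4141}$ versus $R_{1441}=-R_{1414}$, and the identification of terms such as $R_{4123}$ need care. I would organize the computation by listing, for each pair $(y,z)$, the sixteen terms $g^{ij}R_{iyzj}$ and discarding those that vanish by the third line of \eqref{R-loc-P}. Only the $K$ and $L$ terms survive, which makes the required equalities visible by inspection.
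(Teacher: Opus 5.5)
Your proposal is correct and is essentially the paper's proof. Step~2 is exactly the paper's substitution of \eqref{R-loc-P} and the circulant inverse \eqref{g-obratna} into $\rho_{yz}=g^{ij}R_{iyzj}$, and Step~1 is a valid invariant shortcut for $\rho_{11}=\rho_{33}$, $\rho_{22}=\rho_{44}$, $\rho_{12}=\rho_{34}$, $\rho_{14}=\rho_{23}$. One bookkeeping caution: the term $g^{34}R_{3124}$ in $\rho_{12}$ reappears as $-g^{32}R_{3124}$ in $\rho_{14}$ and is not on the third line of \eqref{R-loc-P}, so you must show it vanishes separately. You can do this by the first Bianchi identity, or by noting $R(e_1,e_3,z,u)=R(Pe_1,Pe_3,z,u)=-R(e_1,e_3,z,u)$, which follows from \eqref{3.2-P} and pair symmetry.
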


\begin{proof}
Due to Proposition~\ref{svoistvo-R-P}, the components of $R$ fulfill \eqref{R-loc-P}.
For brevity, we set
\begin{equation}\label{r1-2}
    \begin{split}
R_{1}=R_{1212}, \quad R_{2}=R_{2114}.
\end{split}
\end{equation}
The inverse matrix of $(g_{ij})$ takes the form
\begin{equation}\label{g-obratna}
    (g^{ik})=\frac{1}{D}\begin{pmatrix}
            \overline{A}& \overline{B}& \overline{C} &\overline{B}\\
            \overline{B}&\overline{A}&\overline{B} &\overline{C}\\
            \overline{C}&\overline{B}&\overline{A}&\overline{B}\\
            \overline{B} & \overline{C} &\overline{B} &\overline{A}\\
            \end{pmatrix},
  \end{equation}
where
\begin{equation}\label{coef-obr}
\begin{split}
\overline{A}=A(A+C)-2B^{2},\quad \overline{B}=B(C-A),\\  \overline{C}=2B^{2}-C(A+C),\quad D=(A-C)\big((A+C)^{2}-4B^{2}\big).
\end{split}
\end{equation}

Having in mind \eqref{def-rho}, \eqref{R-loc-P}, \eqref{r1-2}, and \eqref{g-obratna}, we calculate the components of the Ricci tensor $\rho$ as in:
  \begin{equation}\label{rho1-P}
  \begin{split}
      \rho_{11}&=\rho_{22}=\rho_{33}=\rho_{44}=-\frac{2}{D}(\overline{A}R_{1}+\overline{C}R_{2}),\\
    \rho_{12}&=\rho_{23}=\rho_{34}=\rho_{14}=-\frac{2\overline{B}}{D}(R_{1}+R_{2}),\\
    \rho_{13}&=\rho_{24}=-\frac{2}{D}(\overline{A}R_{2}+\overline{C}R_{1}).
    \end{split}
\end{equation}
Thus, equalities \eqref{system-rho-P} are valid.
\end{proof}
The quantity $\tau^{*}$ associated with the scalar curvature $\tau$ is determined by
\begin{equation}\label{def-rho2}
 \tau^{*}=\tilde{g}^{ij}\rho(e_{i}, e_{j}).
\end{equation}
In \cite{griba-mek2}, it was shown that every four-dimensional Riemannian almost-product manifold satisfying \eqref{3.2-P} is almost-Einstein. Here, we specify and reformulate this result for the particular manifold $(M, g, P)$ along with a short proof applicable to our case.
 \begin{prop}\label{prop5-P}
Any manifold $(M, g, P)$ satisfying \eqref{3.2-P} is an almost-Einstein manifold.
\end{prop}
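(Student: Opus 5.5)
The plan is to work entirely in local components, using Proposition~\ref{the1-P}. The Ricci tensor $\rho$ then has the same circulant pattern as $g$ in \eqref{metricg}. Writing $\rho_{11}=a$, $\rho_{12}=b$, $\rho_{13}=c$, its matrix is
\begin{equation*}
(\rho_{ij})=\begin{pmatrix} a & b & c & b\\ b & a & b & c\\ c & b & a & b\\ b & c & b & a\end{pmatrix}.
\end{equation*}
By \eqref{metricJ}, $\tilde g$ has the same pattern, with $(A,B,C)$ placed in the positions $(c,b,a)$. So the condition $\rho=\alpha g+\beta\tilde g$ from \eqref{AE} reduces to three scalar equations in two unknowns:
\begin{equation*}
\rho_{11}=\alpha A+\beta C,\qquad \rho_{12}=(\alpha+\beta)B,\qquad \rho_{13}=\alpha C+\beta A.
\end{equation*}

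I will first solve the first and third equations, which form a linear system with determinant $A^{2}-C^{2}\neq 0$ since $A>C>0$. This gives
\begin{equation*}
\alpha=\frac{A\rho_{11}-C\rho_{13}}{A^{2}-C^{2}},\qquad \beta=\frac{A\rho_{13}-C\rho_{11}}{A^{2}-C^{2}}.
\end{equation*}
Both are smooth functions on $M$. Adding the two equations gives $\alpha+\beta=(\rho_{11}+\rho_{13})/(A+C)$.

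The remaining step, which I expect to be the only real obstacle, is to verify that the middle equation is compatible: $\rho_{12}(A+C)=B(\rho_{11}+\rho_{13})$. Here I will use the explicit formulas \eqref{rho1-P}. They give
\begin{equation*}
\rho_{11}+\rho_{13}=-\frac{2}{D}(\overline{A}+\overline{C})(R_{1}+R_{2}),
\end{equation*}
and from \eqref{coef-obr} we have $\overline{A}+\overline{C}=A^{2}-C^{2}=(A-C)(A+C)$. Also $\rho_{12}=-\frac{2}{D}B(C-A)(R_{1}+R_{2})$. Hence
\begin{equation*}
\rho_{12}(A+C)=-\frac{2}{D}B(C-A)(A+C)(R_{1}+R_{2}),
\end{equation*}
while
\begin{equation*}
B(\rho_{11}+\rho_{13})=-\frac{2}{D}B(A-C)(A+C)(R_{1}+R_{2}).
\end{equation*}
These differ by a sign unless $R_{1}+R_{2}=0$, so I must recheck the sign of $\overline{B}$ against the inverse matrix.

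Checking $g\cdot g^{-1}$, the $(1,2)$ entry is
\begin{equation*}
A\overline{B}+B\overline{A}+C\overline{B}+B\overline{C}=\overline{B}(A+C)+B(\overline{A}+\overline{C}).
\end{equation*}
It must vanish, which forces $\overline{B}=-B(A-C)=B(C-A)$, consistent with \eqref{coef-obr}. So the compatibility hinges on the sign conventions in \eqref{rho1-P} for the off-diagonal term, and I will recompute $\rho_{12}=g^{ij}R_{i12j}$ directly from \eqref{R-loc-P}. If the direct computation confirms a mismatch, I will instead deduce from the first Bianchi identity and \eqref{R-loc-P} that $R_{1}+R_{2}=0$, so that $\rho_{12}=0$ and $\rho_{11}=-\rho_{13}$. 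The middle equation then holds with $\alpha+\beta=0$.

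Either way, the three equations are consistent, $\rho=\alpha g+\beta\tilde g$ holds componentwise, and by tensoriality it holds for all $x,y$. As a cross-check, contracting with $g^{ij}$ and $\tilde g^{ij}$ should express $\alpha,\beta$ through $\tau$ and $\tau^{*}$ from \eqref{def-rho2}, namely $\tau=4\alpha+\beta\,\mathrm{tr}(P)=4\alpha$ and $\tau^{*}=4\beta$.
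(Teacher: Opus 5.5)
Your reduction is correct and follows the paper's route. By Proposition~\ref{the1-P}, the identity $\rho=\alpha g+\beta\tilde g$ reduces to three scalar equations, and everything hinges on the compatibility $\rho_{12}(A+C)=B(\rho_{11}+\rho_{13})$. (The paper's step $\rho_{12}=\frac{B}{4}(\tau+\tau^{*})$ is equivalent to this by \eqref{tau-P}.) You also correctly noticed that the printed formulas \eqref{rho1-P} violate this relation.

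But the proposal stops exactly there: the decisive computation is only announced, and the fallback you offer is false. The relations \eqref{R-loc-P} already incorporate the first Bianchi identity, and they leave $R_1=R_{1212}$ and $R_2=R_{2114}$ independent. Indeed, \eqref{3.2-P} means that every $R(x,y)$ commutes with $P$, so it preserves the eigenplanes $\ker(P\mp\id)=\mathrm{span}\{e_1\pm e_3,\,e_2\pm e_4\}$. Put $\sigma_\pm=R(e_1\pm e_3,e_2\pm e_4,e_1\pm e_3,e_2\pm e_4)$. Then $R_1=\frac{1}{16}(\sigma_++\sigma_-)$ and $R_2=\frac{1}{16}(\sigma_--\sigma_+)$, with $\sigma_+$ and $\sigma_-$ free. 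Thus $R_1+R_2=\sigma_-/8$ need not vanish, and the ``either way'' conclusion is unsupported.

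To close the gap, do the computation you postponed. By \eqref{R-loc-P}, the only nonzero components $R_{i12j}$ are $R_{2121}=R_{4123}=R_1$ and $R_{2123}=R_{4121}=-R_2$. With \eqref{g-obratna} this gives $\rho_{12}=\frac{2\overline{B}}{D}(R_1-R_2)$. In the same way, $\rho_{11}=-\frac{2}{D}(\overline{A}R_1-\overline{C}R_2)$ and $\rho_{13}=-\frac{2}{D}(\overline{C}R_1-\overline{A}R_2)$. Hence $\rho_{11}+\rho_{13}=-\frac{2}{D}(A^2-C^2)(R_1-R_2)$, and $\overline{B}=B(C-A)$ then gives exactly $\rho_{12}(A+C)=B(\rho_{11}+\rho_{13})$.

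So the mismatch you found comes from sign errors in the printed \eqref{rho1-P}, not from the geometry. With the corrected components, your $\alpha$ and $\beta$ satisfy all three equations. By your own trace check they equal $\tau/4$ and $\tau^{*}/4$, as in \eqref{rho-AE-P}.
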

\begin{proof}
The inverse matrix of $(\tilde{g}_{ij})$ is \begin{equation}\label{tilde-g-obratna}
(\tilde{g}^{ij})=\frac{1}{D}\begin{pmatrix}
       \overline{C}& \overline{B}& \overline{A} &\overline{B}\\
            \overline{B}&\overline{C}&\overline{B} &\overline{A}\\
            \overline{A}&\overline{B}&\overline{C}&\overline{B}\\
            \overline{B} & \overline{A} &\overline{B} &\overline{C}\\
    \end{pmatrix}.
\end{equation}
Using \eqref{def-rho}, \eqref{system-rho-P}, \eqref{g-obratna}, \eqref{rho1-P}, \eqref{def-rho2}, and \eqref{tilde-g-obratna} we calculate the values of the scalar curvature $\tau$ and the associated quantity $\tau^{*}$ as follows:
  \begin{equation*}
  \begin{split}
    \tau =\frac{4}{D}(\overline{A}\rho_{11}+2\overline{B}\rho_{12}+\overline{C}\rho_{13}),\quad
        \tau^{*}=\frac{4}{D}(\overline{C}\rho_{11}+2\overline{B}\rho_{12}+\overline{A}\rho_{13}).
        \end{split}
\end{equation*}
From these equalities and \eqref{coef-obr}, we obtain
 \begin{equation}\label{tau-P}
  \begin{split}
    \tau -\tau^{*}&=\frac{4}{A-C}(\rho_{11}-\rho_{13}),\\
       \tau+ \tau^{*}&=\frac{4}{(A+C)^{2}-4B^{2}}((A+C)(\rho_{11}+\rho_{13})-4B\rho_{12}).
        \end{split}
\end{equation}

On the other hand equalities \eqref{coef-obr} and \eqref{rho1-P} imply the relations for
\begin{equation*}
  R_{1}=-\frac{1}{2(A+C)}(\overline{A}\rho_{11}-\overline{C}\rho_{13}),\quad R_{2}=\frac{1}{2(A+C)}(\overline{C}\rho_{11}-\overline{A}\rho_{13}),
\end{equation*}
leading to
\begin{equation*}
  R_{1}+R_{2}=\frac{\overline{C}-\overline{A}}{2(A+C)}(\rho_{11}+\rho_{13}).
\end{equation*}
From \eqref{coef-obr}, \eqref{tau-P} and the above equality, we obtain successively
\begin{equation*}
     \rho_{12}=\frac{B}{4}(\tau +\tau^{*}),\quad
     \rho_{11}=\frac{1}{4}(\tau A+\tau^{*}C),\quad \rho_{13}=\frac{1}{4}(\tau C+\tau^{*}A).
\end{equation*}
Bearing in mind \eqref{metricg}, \eqref{metricJ}, \eqref{system-rho-P}, and the above equalities, we find
\begin{equation*}
     \rho_{ij}=\frac{\tau}{4}g_{ij}+\frac{\tau^{*}}{4}\tilde{g}_{ij},
\end{equation*}
which yields
\begin{equation}\label{rho-AE-P}
     \rho(x,y)=\frac{\tau}{4}g(x,y)+\frac{\tau^{*}}{4}\tilde{g}(x,y).
\end{equation}
Therefore, comparing \eqref{rho-AE-P} with \eqref{AE}, we conclude that $(M, g, P)$ is an almost-Einstein manifold.
\end{proof}
Immediately it follows the next
\begin{corollary}\label{prop-enstein-P}
A manifold $(M, g, P)$ satisfying \eqref{3.2-P} is Einstein if and only if $\tau^{*}=0$.
\end{corollary}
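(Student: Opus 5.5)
The plan is to read the corollary directly off identity \eqref{rho-AE-P} from the proof of Proposition~\ref{prop5-P}:
\[
\rho=\frac{\tau}{4}g+\frac{\tau^{*}}{4}\tilde g .
\]
Recall that $(M,g)$ is Einstein when $\rho=\lambda g$ for some smooth function $\lambda$. The key structural fact is that $g$ and $\tilde g$ are pointwise linearly independent as symmetric bilinear forms. Comparing \eqref{metricg} and \eqref{metricJ}, the $(1,1)$ entries are $A$ and $C$, while the $(1,3)$ entries are $C$ and $A$. A relation $\tilde g=\mu g$ would therefore force $C=\mu A$ and $A=\mu C$, hence $\mu^{2}=1$. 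But $\mu=1$ contradicts $A>C$, and $\mu=-1$ contradicts $A,C>0$.

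For sufficiency, if $\tau^{*}=0$, then \eqref{rho-AE-P} reduces to $\rho=\frac{\tau}{4}g$, so the manifold is Einstein.

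For necessity, suppose $\rho=\lambda g$. Subtracting this from \eqref{rho-AE-P} gives
\[
\Bigl(\frac{\tau}{4}-\lambda\Bigr)g+\frac{\tau^{*}}{4}\tilde g=0 .
\]
To see that both coefficients vanish, evaluate in components. The $(1,1)$ and $(1,3)$ entries give the system
\[
\Bigl(\frac{\tau}{4}-\lambda\Bigr)A+\frac{\tau^{*}}{4}C=0,\qquad \Bigl(\frac{\tau}{4}-\lambda\Bigr)C+\frac{\tau^{*}}{4}A=0 .
\]
Its determinant is $A^{2}-C^{2}=(A-C)(A+C)$, which is nonzero because $A>C>0$. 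Hence $\tau^{*}=0$ (and $\lambda=\tau/4$).

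There is no real obstacle here. The only point needing care is the independence of $g$ and $\tilde g$, and that is settled by the determinant computation above using the standing assumption $A>C>B>0$.
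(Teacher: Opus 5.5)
Your proof is correct and follows the paper's route: the paper states the corollary as an immediate consequence of \eqref{rho-AE-P}, $\rho=\frac{\tau}{4}g+\frac{\tau^{*}}{4}\tilde{g}$, exactly as you do. Your explicit check that $g$ and $\tilde{g}$ are pointwise linearly independent, via the $(1,1)$ and $(1,3)$ components and the determinant $A^{2}-C^{2}\neq 0$, fills in the one step the paper leaves unstated.
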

 \begin{corollary}
Every manifold $(M, g, P)$ under condition \eqref{keler} is an almost-Einstein manifold.
\end{corollary}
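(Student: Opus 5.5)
The corollary follows by chaining the standard implication $\nabla P=0\Rightarrow$ \eqref{3.2-P} with Proposition~\ref{prop5-P}. So the only work is to justify that a manifold $(M,g,P)$ satisfying \eqref{keler} has a curvature tensor that is a Riemannian $P$-tensor. Then Proposition~\ref{prop5-P} yields $\rho=\frac{\tau}{4}g+\frac{\tau^{*}}{4}\tilde g$, which is of the form \eqref{AE}.

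The first step is to show that $\nabla P=0$ makes $P$ commute with the curvature operator. Since $\nabla_{x}(Pz)=P\nabla_{x}z$ for all $x,z$, applying this twice in the definition of $R$ gives
\[
R(x,y)Pz=\nabla_{x}\nabla_{y}Pz-\nabla_{y}\nabla_{x}Pz-\nabla_{[x,y]}Pz=P R(x,y)z .
\]

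The second step lowers the index. We have $R(x,y,Pz,Pu)=g(R(x,y)Pz,Pu)=g(PR(x,y)z,Pu)$. Because $g(Px,Py)=g(x,y)$ holds for $P=Q^{2}$ (a consequence of \eqref{isometry}), this equals $g(R(x,y)z,u)=R(x,y,z,u)$. This is exactly \eqref{3.2-P}.

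The third step applies Proposition~\ref{prop5-P} and concludes. No step is a genuine obstacle. The only point to check is that $P$ is $g$-orthogonal, and that is already recorded in the Preliminaries.

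Alternatively, one could verify \eqref{R-loc-P} directly from the coordinate conditions \eqref{w0}. That route is far heavier, so I would use the invariant argument above.
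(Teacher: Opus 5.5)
Your proof is correct and follows the same route as the paper. The paper asserts at the start of Section~3 that \eqref{keler} implies \eqref{3.2-P} and then obtains the corollary immediately from Proposition~\ref{prop5-P}; your derivation of $R(x,y)Pz=PR(x,y)z$ together with $g(Px,Py)=g(x,y)$ just supplies the justification of that implication, which the paper states without proof.
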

In \cite{griba-mek2}, it is proved that
if property \eqref{3.2-P} holds true for a manifold $(M, g, P)$, then the curvature tensor $R$ can be written as
 \begin{equation}\label{fR-P}
R = \frac{\tau}{8}\pi_{1}+\frac{\tau^{*}}{8}\pi_{2},
\end{equation}
where \begin{equation}\label{pi-pi} 
\begin{split}\pi_{1}(x, y, z, u )=g(y, z)g(x, u) &- g(x, z)g(y, u)\\& \mbox{} +\tilde{g}(y, z)\tilde{g}(x, u)
 - \tilde{g}(x, z)\tilde{g}(y, u),\\
\pi_{2}(x, y, z, u)=g(y, z)\tilde{g}(x, u)&-g(x, z)\tilde{g}(y, u)\\& \mbox{} +\tilde{g}(y, z)g(x, u)
 -\tilde{g}(x, z)g(y, u).
 \end{split}
\end{equation}
The above expression is very useful for obtaining the curvatures of the manifolds that we find in Section~\ref{sec:4}.

\section{Almost-Einstein manifolds $(M, g, Q)$}

In this section, we consider a manifold $(M, g, Q)$ whose curvature tensor $R$ preserves the action of $Q$ on the last two arguments. This condition is stronger than \eqref{3.2-P}.

The condition for $Q$ to be parallel with respect to $\nabla$ is given by
\begin{equation*}
\nabla Q=0.\end{equation*}
In \cite{raz}, it was proved that this equality implies for $Q$ and $R$ the following condition:
\begin{equation}\label{3.2} R(x,y,Qz,Qu)=R(x,y,z,u)\end{equation}
We note that \eqref{3.2} yields \eqref{3.2-P}. Then using ideas and calculations of the previous section, we
establish the following propositions.
\begin{prop}\label{svoistvo-R}
Property \eqref{3.2} of the curvature tensor $R$ on the manifold $(M, g, Q)$ is equivalent to the conditions:
\begin{equation}\label{R-loc}
\begin{array}{ll}
R_{1212}&=R_{3434}=R_{2323}=R_{1414}=R_{1234}=R_{4123}\\&=R_{2114}=R_{4134}=R_{1223}=R_{2334}\\
R_{1213}&=R_{2324}=R_{1424}=R_{3134}=R_{1313}=R_{2424}\\&=R_{1224}=R_{3123}=R_{3114}=R_{4234}=0.
\end{array}
\end{equation}
\end{prop}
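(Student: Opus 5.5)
The plan is to imitate the proof of Proposition~\ref{svoistvo-R-P}, using the local form of \eqref{3.2}:
\begin{equation*}
R_{ijab}Q_{k}^{a}Q_{h}^{b}=R_{ijkh}.
\end{equation*}
By \eqref{Q}, $Q e_{j}=e_{j-1}$ with indices taken mod $4$, since $Q_{j}^{k}=1$ exactly when $k\equiv j+1$ (equivalently $Q_j^{j+1}=1$, read columnwise). Whichever convention is used, the local condition becomes the index-shift rule
\begin{equation*}
R_{ij\,k+1\,h+1}=R_{ijkh},
\end{equation*}
or the analogous shift by $-1$, with indices mod $4$. So \eqref{3.2} means that $R_{ijkh}$ is invariant under the cyclic shift $\sigma:1\to2\to3\to4\to1$ applied to the last two indices only.

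For the forward direction, first note that \eqref{3.2} implies \eqref{3.2-P}, because applying the rule twice gives $P=Q^{2}$. Proposition~\ref{svoistvo-R-P} then already yields all the relations of \eqref{R-loc-P}. What remains is to show the extra equality $R_{1212}=R_{2114}$, together with the vanishing list, which is already contained in \eqref{R-loc-P}.

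The key step is to apply the shift once. From the pair symmetry $R_{ijkh}=R_{khij}$, shifting the last pair by $\sigma$ and then using pair symmetry lets us shift the first pair as well. Consequently, every component is invariant under $\sigma$ applied to the first pair and the last pair independently. For instance,
\begin{equation*}
R_{1212}=R_{1223}=R_{2323},\qquad R_{2114}=R_{2121}=R_{1212},
\end{equation*}
where $(1,4)$ shifts to $(2,1)$ since $4+1\equiv1$. This gives $R_{1}=R_{2}$ in the notation \eqref{r1-2}, so the first two chains of \eqref{R-loc-P} merge into the single chain of \eqref{R-loc}. I will check each equality in \eqref{R-loc} in the same way, by orbit-chasing under the shifts combined with the symmetries $R_{ijkh}=-R_{jikh}=-R_{ijhk}$.

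The main obstacle is bookkeeping. I must make sure no sign error is introduced when a shifted index pair comes out in decreasing order, for example $(4,1)\to(1,2)$ versus $(3,4)\to(4,1)=-(1,4)$. I must also check that the shift rule is consistent with the vanishing components rather than forcing everything to zero. The first thing to try is to list the six index pairs $12,13,14,23,24,34$ and compute how the shift acts on each, with signs: $12\to23\to34\to41=-14\to12$ and $13\to24\to31=-13$. This immediately gives $R_{ij13}=-R_{ij13}$ after two shifts would contradict $P$-invariance unless these components are handled through \eqref{R-loc-P}, where they are indeed zero.

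For the converse, assume \eqref{R-loc}. Every component $R_{ijkh}$ is then either $0$ or $\pm R_{1}$, and the orbit computation above shows that the shift rule $R_{ij\,k+1\,h+1}=R_{ijkh}$ holds for every pair in the list. Hence the local form of \eqref{3.2} holds, and therefore \eqref{3.2} itself.
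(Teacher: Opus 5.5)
Your forward direction is correct, and it is the paper's argument made explicit. The local form turns \eqref{3.2} into the shift rule $R_{ij\,k+1\,h+1}=R_{ijkh}$, and pair symmetry transfers it to the first pair. $Q$-invariance gives $P$-invariance, so Proposition~\ref{svoistvo-R-P} supplies \eqref{R-loc-P}, and $R_{2114}=R_{2121}=R_{1212}$ merges its first two chains. (Note that $R_{ij13}=R_{ij24}=R_{ij31}=-R_{ij13}$ contradicts nothing; it is simply the proof that $R_{ij13}=0$.)

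The gap is in the converse. You claim that under \eqref{R-loc} every component is $0$ or $\pm R_1$. But \eqref{R-loc} together with the index symmetries $R_{ijkh}=-R_{jikh}=-R_{ijhk}=R_{khij}$ determines only $20$ of the $21$ essentially distinct components. The $(13,24)$-component $R_{1324}$ (equivalently $R_{2413}$, $R_{1342}$, \dots) does not occur in the list. It is exactly a component the shift rule tests: shifting the last pair $(1,3)\mapsto(2,4)$ demands $R_{1324}=R_{1313}=0$. So for a tensor with only the index symmetries, the converse would be false.

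You need the first Bianchi identity here, as the paper indicates. From $R_{1234}+R_{1342}+R_{1423}=0$ with $R_{1234}=R_1$ and $R_{1423}=-R_{4123}=-R_1$, you get $R_{1342}=0$, hence $R_{1324}=0$. After this, $R_{ijkh}=R_1\,\omega_{ij}\omega_{kh}$, where $\omega$ is the skew matrix with $\omega_{12}=\omega_{23}=\omega_{34}=\omega_{41}=1$ and $\omega_{13}=\omega_{24}=0$. Since $\omega_{k+1\,h+1}=\omega_{kh}$, your orbit check goes through at once.
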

\begin{proof}
The local form of \eqref{3.2} is
\begin{equation}\label{loc-L3}
R_{ijab}Q_{k}^{a}Q_{h}^{b}=R_{ijkh}.\end{equation}
Using \eqref{Q} and applying the symmetries along with the first Bianchi identity to the components of $R$, we obtain \eqref{R-loc}.

Vice versa, \eqref{loc-L3} follows from \eqref{Q} and \eqref{R-loc}, ensuring that \eqref{3.2} holds true.
\end{proof}

\begin{prop}\label{the1-Q}
If identity \eqref{3.2} holds true for a manifold $(M, g, Q)$, then the components of the Ricci tensor $\rho$ obey the relations:
\begin{equation}\label{system-rho-Q}
\begin{split}
\rho_{11}=\rho_{22}=\rho_{33}=\rho_{44}=-\rho_{13}=-\rho_{24},\quad \rho_{12}=\rho_{23}=\rho_{34}=\rho_{14}=0.
\end{split}
\end{equation}
\end{prop}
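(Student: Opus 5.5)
Since \eqref{3.2} implies \eqref{3.2-P}, Proposition~\ref{the1-P} already gives $\rho_{11}=\rho_{22}=\rho_{33}=\rho_{44}$, $\rho_{13}=\rho_{24}$ and $\rho_{12}=\rho_{23}=\rho_{34}=\rho_{14}$. What remains is to show the two additional relations $\rho_{12}=0$ and $\rho_{13}=-\rho_{11}$. These should come from the extra identification in Proposition~\ref{svoistvo-R}, which merges the two chains of equal components of \eqref{R-loc-P} into a single one, so that $R_{1212}$ and $R_{2114}$ are no longer independent.

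The plan is not to reuse the closed formulas \eqref{rho1-P} blindly, because their derivation involves many sign conventions (e.g. $R_{1214}=-R_{1241}$). Instead, I would recompute the Ricci components directly from \eqref{def-rho} in the form $\rho_{kh}=g^{ij}R_{ikhj}$, using the inverse metric \eqref{g-obratna}. Before summing, I would tabulate every component $R_{ikhj}$ with $(k,h)\in\{(1,1),(1,2),(1,3)\}$. Each one is reduced, via the antisymmetries, the pair symmetry, the first Bianchi identity and the local relation \eqref{loc-L3}, to $\pm R_{1212}$ or $0$. The shift relations are particularly convenient here: since $Q_1^2=Q_2^3=Q_3^4=Q_4^1=1$, \eqref{loc-L3} gives $R_{ij12}=R_{ij23}=R_{ij34}=R_{ij41}$ and $R_{ij13}=R_{ij24}=R_{ij31}=R_{ij42}$. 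For instance, the last chain forces $R_{ij13}=-R_{ij13}$, hence $R_{ij13}=0$ for all $i,j$, which already kills many terms.

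With the table in hand, I expect $\rho_{12}$ to be $\overline{B}/D$ times a signed sum of copies of $R_{1212}$. For example, the terms with $(i,j)=(2,1),(2,3),(4,1),(4,3)$ contribute $\overline{B}(R_1-R_1-R_1+R_1)$ with $R_1=R_{1212}$. The remaining terms with $i=3$ vanish, or cancel in the same way, once $R_{3124}$ is reduced by Bianchi. Thus $\rho_{12}=0$. In the same way, $\rho_{11}$ and $\rho_{13}$ should both become multiples of $(\overline{A}-\overline{C})R_{1212}/D$ with opposite signs, giving $\rho_{13}=-\rho_{11}$. The relations for the other indices then follow, either from Proposition~\ref{the1-P} or directly from the invariance $\rho(Qx,Qy)=\rho(x,y)$. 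This invariance follows from \eqref{3.2} together with \eqref{isometry}, since the trace in \eqref{def-rho} can be taken in the $g$-orthonormal-invariant way.

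The main obstacle is the sign bookkeeping in the table of components, especially for the mixed components such as $R_{3124}$ and $R_{1324}$, which must be resolved through the first Bianchi identity. The sign of $R_{2114}$ relative to $R_{1212}$ is exactly what decides whether $R_1+R_2$ or $R_1-R_2$ appears in $\rho_{12}$. I would double-check that step by independently verifying $\rho(Qx,Qy)=\rho(x,y)$ and the vanishing $R_{ij13}=0$ on the computed table.
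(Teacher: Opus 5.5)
Your proposal is correct and follows essentially the paper's route. The paper evaluates $\rho_{kh}=g^{ij}R_{ikhj}$ directly with the inverse metric \eqref{g-obratna} and the component relations \eqref{R-loc}, which your shift relations from \eqref{loc-L3} reproduce. The only difference is that you compute just $\rho_{11},\rho_{12},\rho_{13}$ and obtain the remaining equalities from $\rho(Qx,Qy)=\rho(x,y)$ (or Proposition~\ref{the1-P}), arriving at $\rho_{11}=\frac{2R_{1}}{D}(\overline{C}-\overline{A})=-\rho_{13}$ and $\rho_{12}=0$, as the paper does. You were right not to substitute $R_{1}=R_{2}$ into \eqref{rho1-P}: that formula would give $\rho_{11}=\rho_{13}$ and $\rho_{12}=-4\overline{B}R_{1}/D\neq 0$, which is incompatible with the statement, whereas the direct count you sketch gives the correct signs.
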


\begin{proof}
Due to Proposition~\ref{svoistvo-R}, the components of $R$ comply with \eqref{R-loc}.
For brevity, we set $R_{1}=R_{1212}$.
Having in mind \eqref{def-rho}, \eqref{g-obratna}, and \eqref{R-loc}, we compute the components of the Ricci tensor $\rho$ as detailed in:
  \begin{equation*}
  \begin{split}
      \rho_{11}&=\rho_{22}=\rho_{33}=\rho_{44}=\frac{2R_{1}}{D}(-\overline{A}+\overline{C}),\\
    \rho_{12}&=\rho_{23}=\rho_{34}=-\rho_{14}=0,\\
    \rho_{13}&=\rho_{24}=\frac{2R_{1}}{D}(\overline{A}-\overline{C}).
    \end{split}
\end{equation*}
Thus, the equalities \eqref{system-rho-Q} are valid.
\end{proof}
By extending the previous arguments and applying the specific restrictions of the structure $Q$, we establish the corresponding almost-Einstein property for $(M, g, Q)$.
 \begin{prop}\label{prop5-Q}
Any manifold $(M, g, Q)$ satisfying \eqref{3.2} is an almost-Einstein manifold.
\end{prop}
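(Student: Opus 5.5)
The plan is to follow the proof of Proposition~\ref{prop5-P}, using the stronger component relations \eqref{system-rho-Q} in place of \eqref{system-rho-P}.

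First, I will compute $\tau$ and $\tau^{*}$ from \eqref{def-rho}, \eqref{def-rho2}, \eqref{g-obratna} and \eqref{tilde-g-obratna}. Since \eqref{system-rho-Q} gives $\rho_{12}=0$ and $\rho_{13}=-\rho_{11}$, the general formulas from the proof of Proposition~\ref{prop5-P} reduce to
\[
\tau=\frac{4}{D}(\overline{A}-\overline{C})\rho_{11},\qquad \tau^{*}=\frac{4}{D}(\overline{C}-\overline{A})\rho_{11}=-\tau .
\]
By \eqref{coef-obr}, $\overline{A}-\overline{C}=(A+C)^{2}-4B^{2}$, so $D=(A-C)(\overline{A}-\overline{C})$. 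This gives $\tau=\frac{4\rho_{11}}{A-C}$, that is,
\[
\rho_{11}=\frac{\tau}{4}(A-C).
\]

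Next, I will compare with the candidate $\rho=\alpha g+\beta\tilde g$ using \eqref{metricg} and \eqref{metricJ}. Take $\alpha=\frac{\tau}{4}$ and $\beta=\frac{\tau^{*}}{4}=-\frac{\tau}{4}$. Then:
\begin{itemize}
\item the $(1,1)$ entry is $\frac{\tau}{4}(A-C)=\rho_{11}$;
\item the $(1,3)$ entry is $\frac{\tau}{4}(C-A)=-\rho_{11}=\rho_{13}$;
\item the $(1,2)$ entry is $\frac{\tau}{4}(B-B)=0=\rho_{12}$.
\end{itemize}
By the circulant symmetry in \eqref{system-rho-Q}, the remaining components agree as well. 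Hence $\rho_{ij}=\frac{\tau}{4}(g_{ij}-\tilde g_{ij})$, so
\[
\rho(x,y)=\frac{\tau}{4}g(x,y)+\frac{\tau^{*}}{4}\tilde g(x,y),\qquad \tau^{*}=-\tau,
\]
which is \eqref{AE}.

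Alternatively, one can shortcut the whole computation. Property \eqref{3.2} implies \eqref{3.2-P}, so Proposition~\ref{prop5-P} already gives the almost-Einstein conclusion directly. The computation above adds only the specialization $\tau^{*}=-\tau$.

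The only delicate step is the algebraic identity $\overline{A}-\overline{C}=(A+C)^{2}-4B^{2}$ together with its link to $D$. I would check it directly from \eqref{coef-obr}:
\[
A(A+C)-2B^{2}-2B^{2}+C(A+C)=(A+C)^{2}-4B^{2}.
\]
This quantity is nonzero because $A>C>B>0$.

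One point needs care. The proof of Proposition~\ref{the1-Q} as printed writes $-\rho_{14}$, which conflicts with its statement; since every $\rho_{12}$-type component vanishes, this does not affect the argument.
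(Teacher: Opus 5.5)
Your proposal is correct and follows essentially the paper's own proof. You compute $\tau$ and $\tau^{*}$ from \eqref{system-rho-Q}, get $\tau^{*}=-\tau$ and $\rho_{11}=\frac{\tau}{4}(A-C)$ via $\overline{A}-\overline{C}=(A+C)^{2}-4B^{2}$, and match components to obtain $\rho=\frac{\tau}{4}(g-\tilde g)$. Your shortcut is also valid: \eqref{3.2} implies \eqref{3.2-P}, as the paper notes, so Proposition~\ref{prop5-P} already gives the almost-Einstein property, and the explicit computation only adds $\tau^{*}=-\tau$.
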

\begin{proof}
Using \eqref{g-obratna}, \eqref{def-rho2}, \eqref{system-rho-Q}, and \eqref{tilde-g-obratna}, we determine the values of the scalar curvature $\tau$ and the associated quantity $\tau^{*}$ as follows:
  \begin{equation*}\label{tau-Q}
    \tau =\frac{4}{D}(\overline{A}-\overline{C})\rho_{11},\quad
        \tau^{*} =\frac{4}{D}(\overline{C}-\overline{A})\rho_{11}.
\end{equation*}
These equalities
and \eqref{coef-obr} imply
\begin{equation}\label{rho-tau-Q}
     \tau^{*}=-\tau, \quad \rho_{11}=\frac{\tau}{4}(A-C).
\end{equation}
From \eqref{rho-tau-Q}, bearing in mind \eqref{metricg} and \eqref{metricJ}, we get
\begin{equation*}
    \rho_{11}=\frac{\tau}{4}(g_{11}-\tilde{g}_{11}).
\end{equation*}
Taking into account \eqref{metricg}, \eqref{metricJ}, \eqref{system-rho-Q}, and \eqref{rho-tau-Q}, which yields
\begin{equation*}
     \rho_{ij}=\frac{\tau}{4}(g_{ij}-\tilde{g}_{ij}),
\end{equation*}
i.e.
\begin{equation}\label{rho-AE-Q}
     \rho(x,y)=\frac{\tau}{4}(g(x,y)-\tilde{g}(x,y)).
\end{equation}
Therefore, comparing \eqref{rho-AE-Q} with \eqref{AE}, we state that $(M, g, Q)$ is an almost-Einstein manifold.
\end{proof}
Immediately it follows the next
\begin{rem}\label{prop-enstein-Q}
A manifold $(M, g, Q)$ satisfying \eqref{3.2} does not admit Einstein metric.
\end{rem}
As a direct consequence of formula \eqref{fR-P} and the first identity in \eqref{rho-tau-Q}, we establish the explicit geometric representation for the curvature tensor.
\begin{prop}\label{th3.R}
If identity \eqref{3.2} holds true for a manifold $(M, g, Q)$, then the curvature tensor $R$ admits the presentation
 \begin{equation}\label{fR-Q}
R = \frac{\tau}{8}(\pi_{1}-\pi_{2}),
\end{equation}
where $\pi_{1}$ and $\pi_{2}$  are given by \eqref{pi-pi}.
\end{prop}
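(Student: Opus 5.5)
Since \eqref{3.2} implies \eqref{3.2-P} (apply it twice, using $P=Q^{2}$), every manifold $(M,g,Q)$ satisfying \eqref{3.2} is also a manifold $(M,g,P)$ satisfying \eqref{3.2-P}. So the representation \eqref{fR-P} of \cite{griba-mek2} holds for its curvature tensor:
\begin{equation*}
R=\frac{\tau}{8}\pi_{1}+\frac{\tau^{*}}{8}\pi_{2}.
\end{equation*}
Proposition~\ref{prop5-Q} gives the first identity in \eqref{rho-tau-Q}, $\tau^{*}=-\tau$. Substituting this into the display yields $R=\frac{\tau}{8}(\pi_{1}-\pi_{2})$, which is \eqref{fR-Q}.

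Since \eqref{fR-P} is quoted from \cite{griba-mek2} rather than proved in the paper, I would also check it directly in local components as a safeguard. By Proposition~\ref{svoistvo-R}, every component of $R$ is either $0$ or $\pm R_{1}$ with $R_{1}=R_{1212}$. I would compute the components of $\pi_{1}-\pi_{2}$ from \eqref{metricg} and \eqref{metricJ}. Writing $\pi_{1}-\pi_{2}$ in the form
\begin{equation*}
(g-\tilde g)(y,z)(g-\tilde g)(x,u)-(g-\tilde g)(x,z)(g-\tilde g)(y,u)
\end{equation*}
makes this easy, because $g-\tilde g$ has entries $A-C$ on the diagonal, $0$ for $|i-j|=1,3$, and $C-A$ for $|i-j|=2$. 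With this form, the $(1,2,1,2)$ component is $-(A-C)^{2}$. The components forced to vanish in \eqref{R-loc} vanish for $\pi_{1}-\pi_{2}$ as well, and the remaining ones all equal $\pm(A-C)^{2}$ with the signs dictated by \eqref{R-loc}.

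It then remains to match the constant. From Proposition~\ref{the1-Q} and \eqref{coef-obr},
\begin{equation*}
\overline{A}-\overline{C}=(A+C)^{2}-4B^{2},\qquad \frac{\overline{A}-\overline{C}}{D}=\frac{1}{A-C},
\end{equation*}
so $\rho_{11}=-2R_{1}/(A-C)$. Combined with $\rho_{11}=\frac{\tau}{4}(A-C)$ from \eqref{rho-tau-Q}, this gives $R_{1}=-\frac{\tau}{8}(A-C)^{2}$, which agrees with $\frac{\tau}{8}$ times the $(1,2,1,2)$ component of $\pi_{1}-\pi_{2}$.

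I expect the main obstacle to be sign bookkeeping: the sign conventions for $R_{ijkh}$ in \eqref{R-loc} against the definition of $\pi_{1},\pi_{2}$ in \eqref{pi-pi}. I would settle this by the single check on $R_{1212}$ above before relying on the rest of the comparison.
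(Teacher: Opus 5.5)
Your proof is correct and is the paper's own argument: \eqref{3.2} implies \eqref{3.2-P}, so \eqref{fR-P} applies, and substituting $\tau^{*}=-\tau$ from \eqref{rho-tau-Q} gives \eqref{fR-Q}. Your component check is also right: all twenty components listed in \eqref{R-loc} agree with those of $\frac{\tau}{8}(\pi_{1}-\pi_{2})$, using $\pi_{1}-\pi_{2}=(g-\tilde g)(y,z)(g-\tilde g)(x,u)-(g-\tilde g)(x,z)(g-\tilde g)(y,u)$ and $R_{1212}=-\frac{\tau}{8}(A-C)^{2}$, and since those twenty form a complete independent set, it is in fact a self-contained proof that does not rely on the result quoted from \cite{griba-mek2}.
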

\section{Curvature properties}\label{sec:4}

In this section, we consider the sectional and Ricci curvatures of $(M, g, Q)$ and $(M, g, P)$ determined by special bases of the tangent space $T_{p}M$.
\begin{defn}
A basis of the form $\{x, Qx, Q^{2}x, Q^{3}x\}$ of $T_{p}M$ is called a $Q$-\textit{basis}. In this case, \textit{the vector $x$ is said to induce a $Q$-basis of} $T_{p}M$.
\end{defn}
In \cite{raz}, it was proved that if a vector $x$ induces a $Q$-basis, the angles between the basis vectors satisfy
\begin{equation*}
\begin{split}
& \angle(x,Qx)=\angle(Qx,Q^{2}x)=\angle(x,Q^{3}x)=\angle(Q^{2}x,Q^{3}x)=\varphi,\\
& \angle(x,Q^{2}x)=\angle(Qx,Q^{3}x)=\phi.
\end{split}
\end{equation*}
Moreover, the angles $\varphi=\angle(x,Qx)$ and $\phi=\angle(x,Q^{2}x)$ obey the following inequalities:
\begin{equation*}
  \frac{\phi}{2}<\varphi<\pi-\frac{\phi}{2}.
\end{equation*}
\begin{theorem}\label{predL1}
Let $(M, g, Q)$ be a manifold satisfying property \eqref{3.2}. If a vector $x$ induces a $Q$-basis, then the sectional curvatures of the basic $2$-planes are given by:
\begin{equation*}
\begin{split}
k(x,Qx)&=k(Qx,Q^{2}x)=k(Q^{2}x,Q^{3}x)=k(Q^{3}x,x)\\&=-\frac{\tau}{8\sin^{2}\varphi}(\cos\phi-1)^{2},\\
k(x,Q^{2}x)&=k(Qx,Q^{3}x)=0,
\end{split}
\end{equation*}
where $\varphi=\angle(x, Qx),$ $\phi=\angle(x, Q^{2}x).$
\end{theorem}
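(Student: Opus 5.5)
The plan is to substitute the $Q$-basis vectors directly into the explicit form \eqref{fR-Q} of Proposition~\ref{th3.R}, namely $R=\frac{\tau}{8}(\pi_{1}-\pi_{2})$, and then into the definition \eqref{3.3} of the sectional curvature. Since $\pi_{1}$ and $\pi_{2}$ are built only from $g$ and $\tilde{g}$, it suffices to know the values of $g$ and $\tilde{g}$ on pairs of basis vectors.

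\textbf{Step 1: values of $g$ and $\tilde{g}$ on the $Q$-basis.} Set $a=g(x,x)$. By \eqref{isometry}, $g(Q^{i}x,Q^{i}x)=a$ for every $i$. By the angle relations quoted from \cite{raz},
\[
g(x,Qx)=a\cos\varphi,\qquad g(x,Q^{2}x)=a\cos\phi ,
\]
and the same holds for the corresponding shifted pairs. Since $\tilde{g}(u,v)=g(Q^{2}u,v)$, we obtain
\[
\tilde{g}(x,x)=\tilde{g}(Qx,Qx)=a\cos\phi,\qquad \tilde{g}(x,Qx)=g(Q^{2}x,Qx)=a\cos\varphi,
\]
\[
\tilde{g}(x,Q^{2}x)=g(Q^{2}x,Q^{2}x)=a,\qquad \tilde{g}(Q^{2}x,Q^{2}x)=a\cos\phi .
\]

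\textbf{Step 2: the plane $\{x,Qx\}$.} From \eqref{pi-pi} with $(x,y,z,u)=(x,Qx,x,Qx)$,
\[
\pi_{1}=a^{2}\bigl(2\cos^{2}\varphi-1-\cos^{2}\phi\bigr),\qquad \pi_{2}=a^{2}\bigl(2\cos^{2}\varphi-2\cos\phi\bigr).
\]
Hence
\[
R(x,Qx,x,Qx)=-\frac{\tau a^{2}}{8}(\cos\phi-1)^{2}.
\]
The denominator in \eqref{3.3} is $a^{2}-a^{2}\cos^{2}\varphi=a^{2}\sin^{2}\varphi$. Dividing gives the stated value $-\frac{\tau}{8\sin^{2}\varphi}(\cos\phi-1)^{2}$.

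\textbf{Step 3: the plane $\{x,Q^{2}x\}$.} The same substitution gives
\[
\pi_{1}=a^{2}\cos^{2}\phi-a^{2}+a^{2}-a^{2}\cos^{2}\phi=0 .
\]
Likewise $\pi_{2}$ consists of four terms each equal to $\pm a^{2}\cos\phi$, which cancel. The denominator is $a^{2}\sin^{2}\phi\neq0$, because $x$ and $Q^{2}x$ are independent. Therefore $k(x,Q^{2}x)=0$.

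\textbf{Step 4: the remaining planes.} The other basic planes are $\{Qx,Q^{2}x\}$, $\{Q^{2}x,Q^{3}x\}$, $\{Q^{3}x,x\}$ and $\{Qx,Q^{3}x\}$. Each is the image of $\{x,Qx\}$ or $\{x,Q^{2}x\}$ under a power of $Q$. Because $Q$ commutes with $P=Q^{2}$, both $g$ and $\tilde{g}$ are $Q$-invariant. So $\pi_{1}$, $\pi_{2}$ and the denominator of \eqref{3.3} are unchanged under this action, and all the stated equalities follow. (For $\{Q^{3}x,x\}$ we use $Q^{4}=\id$.)

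The main obstacle I expect is sign bookkeeping. One must be careful about the ordering convention $R(x,y,x,y)$ in \eqref{3.3} compared with $\pi_{i}(x,y,z,u)$, and about the fact that $\tilde{g}(x,Qx)$ involves $\varphi$ rather than $\phi$. Apart from that, the computation is mechanical.
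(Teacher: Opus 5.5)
Your proposal is correct and takes essentially the paper's route: tabulate $g$ and $\tilde{g}$ on the $Q$-basis, then substitute into $R=\frac{\tau}{8}(\pi_{1}-\pi_{2})$ and \eqref{3.3}. Your values of $\pi_{1},\pi_{2}$ and of $\tilde{g}(x,x)=g(x,x)\cos\phi$ (the paper misprints this last one as $\cos\phi$) are right, and the only cosmetic difference is that you obtain the equalities between the basic planes from the $Q$-invariance of $g$ and $\tilde{g}$, whereas the paper derives $R(Qx,Qy,Qz,Qu)=R(x,y,z,u)$ directly from \eqref{3.2}.
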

\begin{proof}
If $(M, g, Q)$ satisfies \eqref{3.2}, it follows that
\begin{equation*}
R(Qx,Qy,Qz,Qu)=R(x,y,z,u).\end{equation*}
From this equality,
bearing in mind \eqref{Q4} and the symmetries of $R$, we obtain
\begin{equation*}
\begin{split}
  R(x, Q^{2}x, x, Q^{2}x)&=R(Qx, Q^{3}x, Qx, Q^{3}x)\\R(x, Qx, x, Qx)&=R(x, Q^{3}x, x, Q^{3}x)=
  R(Qx, Q^{2}x, Qx, Q^{2}x)\\&=R(Q^{2}x, Q^{3}x, Q^{2}x, Q^{3}x).
  \end{split}
\end{equation*}
 Using \eqref{isometry} and the angle relations, we calculate:
 \begin{equation}\label{angles-g}
\begin{split}
 g(x,Qx)=g(Qx,Q^{2}x)&=g(Q^{2}x,Q^{3}x)=g(x,Q^{3}x)=g(x,x)\cos\varphi,\\
 g(x,Q^{2}x)=g(Qx,Q^{3}x)&=g(x,x)\cos\phi
\end{split}
\end{equation}
and
\begin{equation}\label{angles-tilde-g}
\begin{split}
\tilde{g}(x,Qx)= \tilde{g}(Qx,Q^{2}x)&= \tilde{g}(Q^{2}x,Q^{3}x)\\&= \tilde{g}(x,Q^{3}x)=g(x,x)\cos\varphi,\\
 \tilde{g}(x,x)=\cos\phi, \quad \tilde{g}(x,Q^{2}x)&= \tilde{g}(Qx,Q^{3}x)=g(x,x).
\end{split}
\end{equation}
 Applying  \eqref{pi-pi}, \eqref{fR-Q}, \eqref{angles-g}, and \eqref{angles-tilde-g} to \eqref{3.3}, we arrive at the result.
\end{proof}

\begin{theorem}\label{t2.5}
Let $(M, g, P)$ be a manifold satisfying \eqref{3.2-P} and let a vector $x$ induce a $Q$-basis. Then, the sectional curvatures of the $2$-planes determined by the $Q$-basis are given by
\begin{equation}\label{obmu}
\begin{split}
    k(x,Qx)&=k(Qx,Q^{2}x)=k(Q^{2}x,Q^{3}x)=k(Q^{3}x,x)\\&=\frac{1}{8\sin^{2}\varphi}\Big(\tau(2\cos^{2}\varphi-\cos^{2}\phi-1)+2\tau^{*}(\cos^{2}\varphi-\cos\phi)\Big),\\
    k(x,Q^{2}x)&=k(Qx,Q^{3}x)=0,
    \end{split}
\end{equation}
where $\varphi=\angle(x, Qx)$, $\phi=\angle(x, Q^{2}x)$.
\end{theorem}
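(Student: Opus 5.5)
The plan is to bypass any component computation and work directly from the representation \eqref{fR-P}, $R=\frac{\tau}{8}\pi_{1}+\frac{\tau^{*}}{8}\pi_{2}$, which holds for every $(M,g,P)$ satisfying \eqref{3.2-P}. The sectional curvature \eqref{3.3} of a basic $2$-plane then reduces to evaluating $\pi_{1}(x,y,x,y)$ and $\pi_{2}(x,y,x,y)$ for $y\in\{Qx,Q^{2}x\}$. These values depend only on the numbers $g(\cdot,\cdot)$ and $\tilde g(\cdot,\cdot)$ among the $Q$-basis vectors. Since \eqref{3.3} is homogeneous of degree zero in $x$, I would normalize $g(x,x)=1$ throughout.

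First I handle the equalities among the planes. By \eqref{isometry}, both $g$ and $\tilde g$ are $Q$-invariant: $\tilde g(Qx,Qy)=g(Q^{3}x,Qy)=g(Q^{2}x,y)=\tilde g(x,y)$. Hence $\pi_{1}$ and $\pi_{2}$ are invariant under simultaneous application of $Q$ to all four arguments, and so is $R$ by \eqref{fR-P}. The same holds for the denominators in \eqref{3.3}. This gives $k(x,Qx)=k(Qx,Q^{2}x)=k(Q^{2}x,Q^{3}x)=k(Q^{3}x,x)$ and $k(x,Q^{2}x)=k(Qx,Q^{3}x)$, since $Q^{4}=\id$ by \eqref{Q4}.

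Next I tabulate the needed values, exactly as in \eqref{angles-g} and \eqref{angles-tilde-g}. For $g$ we have $g(x,Qx)=\cos\varphi$, $g(x,Q^{2}x)=\cos\phi$ and $g(Q^{k}x,Q^{k}x)=1$. For $\tilde g(u,v)=g(Q^{2}u,v)$ we have $\tilde g(x,x)=\tilde g(Qx,Qx)=\tilde g(Q^{2}x,Q^{2}x)=\cos\phi$, $\tilde g(x,Qx)=g(Q^{2}x,Qx)=\cos\varphi$, and $\tilde g(x,Q^{2}x)=1$.

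Substituting into \eqref{pi-pi} for $y=Qx$ should give
\[
\pi_{1}(x,Qx,x,Qx)=2\cos^{2}\varphi-1-\cos^{2}\phi,\qquad
\pi_{2}(x,Qx,x,Qx)=2\cos^{2}\varphi-2\cos\phi,
\]
with denominator $1-\cos^{2}\varphi=\sin^{2}\varphi$. This yields \eqref{obmu}. For $y=Q^{2}x$, I expect
\[
\pi_{1}(x,Q^{2}x,x,Q^{2}x)=\cos^{2}\phi-1+1-\cos^{2}\phi=0,\qquad
\pi_{2}(x,Q^{2}x,x,Q^{2}x)=2\cos\phi-2\cos\phi=0,
\]
so the numerator vanishes. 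The denominator is $\sin^{2}\phi\neq0$ because $x$ induces a basis, and therefore $k(x,Q^{2}x)=0$.

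The only delicate point is bookkeeping of signs in \eqref{pi-pi} when evaluated at $(x,y,x,y)$, together with consistency with the sign convention of \eqref{3.3}. I would check this against the $Q$-case: setting $\tau^{*}=-\tau$ in the result, the bracket becomes $-\tau(\cos\phi-1)^{2}$, which matches Theorem~\ref{predL1}.
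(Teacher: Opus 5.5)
Your proposal is correct and follows the paper's own argument: substitute \eqref{fR-P} with \eqref{pi-pi} and the values \eqref{angles-g}, \eqref{angles-tilde-g} into \eqref{3.3}. Your values $\pi_{1}(x,Qx,x,Qx)=2\cos^{2}\varphi-1-\cos^{2}\phi$ and $\pi_{2}(x,Qx,x,Qx)=2\cos^{2}\varphi-2\cos\phi$, and their vanishing for $y=Q^{2}x$, are right; your $Q$-invariance argument for the equalities among the planes spells out what the paper leaves to its citation of \eqref{isometry}, and your normalization $g(x,x)=1$ also absorbs the missing factor $g(x,x)$ in the paper's $\tilde g(x,x)=\cos\phi$.
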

\begin{proof}
By applying \eqref{isometry}, \eqref{fR-P}, \eqref{pi-pi}, \eqref{angles-g}, and \eqref{angles-tilde-g} into \eqref{3.3}, we obtain \eqref{obmu}.
\end{proof}
\begin{corollary}
Let a vector $x$ induce an orthonormal $Q$-basis. Then,
\begin{equation*}
  \begin{split}
    k(x,Qx)=-\frac{\tau}{8}.
    \end{split}
\end{equation*}
\end{corollary}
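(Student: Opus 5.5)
The corollary should follow by direct substitution into the sectional curvature formulas of Theorem~\ref{t2.5} (and, in the setting of \eqref{3.2}, Theorem~\ref{predL1}). No new curvature computation is needed.

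First I translate orthonormality into the angle data. If $\{x,Qx,Q^{2}x,Q^{3}x\}$ is orthonormal, then $g(x,Qx)=g(x,Q^{2}x)=0$ and $g(x,x)=1$. By \eqref{angles-g}, $\cos\varphi=0$ and $\cos\phi=0$, that is, $\varphi=\phi=\frac{\pi}{2}$, so $\sin^{2}\varphi=1$. I check that this choice is compatible with the admissible range $\frac{\phi}{2}<\varphi<\pi-\frac{\phi}{2}$ from \cite{raz}: here it reads $\frac{\pi}{4}<\frac{\pi}{2}<\frac{3\pi}{4}$, so orthonormal $Q$-bases are consistent with the constraints.

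Next I substitute into \eqref{obmu}. The bracket becomes $\tau(2\cdot 0-0-1)+2\tau^{*}(0-0)=-\tau$, so
\[
k(x,Qx)=\frac{1}{8}\,(-\tau)=-\frac{\tau}{8}.
\]
The $\tau^{*}$-term drops out entirely, so the result holds for every manifold $(M,g,P)$ satisfying \eqref{3.2-P}. As a consistency check, for $(M,g,Q)$ satisfying \eqref{3.2}, Theorem~\ref{predL1} gives $-\frac{\tau}{8\cdot 1}(0-1)^{2}=-\frac{\tau}{8}$, which is the same value. This agreement is expected, since there $\tau^{*}=-\tau$ by \eqref{rho-tau-Q}.

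The only point needing care is to confirm that orthonormality forces $\cos\phi=0$ as well as $\cos\varphi=0$. This holds because $Q^{2}x$ is one of the basis vectors, so $g(x,Q^{2}x)=0$. The remaining steps are arithmetic substitution.
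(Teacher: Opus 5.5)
Your proposal is correct and follows the paper's proof, which likewise substitutes $\varphi=\phi=\frac{\pi}{2}$ into \eqref{obmu} and notes that the $\tau^{*}$-term vanishes. Your cross-check against Theorem~\ref{predL1}, which is consistent because $\tau^{*}=-\tau$ by \eqref{rho-tau-Q}, is a sound extra that the paper omits.
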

\begin{proof}
The proof follows directly from \eqref{obmu}, when $\varphi=\phi=\frac{\pi}{2}$.
\end{proof}
Due to Theorem~\ref{t2.5} and Corollary \ref{prop-enstein-P} we establish the following.
\begin{prop} If a manifold $(M, g, P)$ under condition \eqref{3.2-P} is Einstein, then the sectional curvatures of the 2-planes determined by a $Q$-basis are take the form
\begin{equation*}
\begin{split}
    k(x,Px)=\frac{\tau(2\cos^{2}\varphi-\cos^{2}\phi-1)}{8\sin^{2}\varphi}.
    \end{split}
\end{equation*}
\end{prop}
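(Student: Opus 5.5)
The plan is to combine Theorem~\ref{t2.5} with Corollary~\ref{prop-enstein-P}. By Corollary~\ref{prop-enstein-P}, a manifold $(M, g, P)$ satisfying \eqref{3.2-P} is Einstein exactly when $\tau^{*}=0$. Equivalently, \eqref{rho-AE-P} reduces to $\rho=\frac{\tau}{4}g$, since $\tilde{g}$ is not proportional to $g$ and the two are linearly independent. So the first step is to record $\tau^{*}=0$. Substituting this into \eqref{fR-P} leaves the single term
\begin{equation*}
R=\frac{\tau}{8}\pi_{1}.
\end{equation*}

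Next, let $x$ induce a $Q$-basis and consider a basic $2$-plane spanned by $x$ and $Qx$. I will take this to be the plane the statement intends. The expression is exactly the $\tau$-part of \eqref{obmu}, and $Px=Q^{2}x$ would give $k(x,Px)=0$ by Theorem~\ref{t2.5}, so $k(x,Px)$ in the statement must be read as the curvature $k(x,Qx)$ of the adjacent basic planes. Setting $\tau^{*}=0$ in \eqref{obmu} kills the summand $2\tau^{*}(\cos^{2}\varphi-\cos\phi)$. What remains is
\begin{equation*}
\frac{\tau(2\cos^{2}\varphi-\cos^{2}\phi-1)}{8\sin^{2}\varphi}.
\end{equation*}
By Theorem~\ref{t2.5}, this value is common to $k(x,Qx)$, $k(Qx,Q^{2}x)$, $k(Q^{2}x,Q^{3}x)$ and $k(Q^{3}x,x)$.

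As a sanity check, I would recompute the value directly from $R=\frac{\tau}{8}\pi_{1}$ and \eqref{3.3}. The inputs are the relations \eqref{angles-g} and the $\tilde{g}$-values \eqref{angles-tilde-g}, after normalizing $g(x,x)=1$. The denominator is $1-\cos^{2}\varphi=\sin^{2}\varphi$. The numerator is $\pi_{1}(x,Qx,x,Qx)$, computed from $g(x,Qx)=\cos\varphi$, $\tilde{g}(x,x)=\tilde{g}(Qx,Qx)=\cos\phi$ and $\tilde{g}(x,Qx)=\cos\varphi$. Because $\pi_{1}(x,y,x,y)$ is a negative sectional-type expression, it should reproduce $2\cos^{2}\varphi-\cos^{2}\phi-1$ up to the sign convention already used in \eqref{obmu}.

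The only real obstacle is bookkeeping: matching the sign convention of $R(x,y,x,y)$ against \eqref{pi-pi}, and making sure the intended plane is $\{x,Qx\}$. Both are settled by deferring to \eqref{obmu}, so the proof is a one-line specialization of Theorem~\ref{t2.5}.
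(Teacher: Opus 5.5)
Your proof is correct and coincides with the paper's, which obtains this proposition simply by setting $\tau^{*}=0$ (Corollary~\ref{prop-enstein-P}) in \eqref{obmu}. You are also right to read $k(x,Px)$ as the curvature of the basic plane $\{x,Qx\}$, since literally $k(x,Q^{2}x)=0$ by Theorem~\ref{t2.5}, and your sanity check closes cleanly: $\pi_{1}(x,Qx,x,Qx)=2\cos^{2}\varphi-1-\cos^{2}\phi$ exactly, using $\tilde{g}(Qx,Qx)=g(Q^{3}x,Qx)=\cos\phi$, so no sign adjustment is needed.
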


 \begin{theorem}
Let $(M, g, P)$ satisfy identity \eqref{3.2-P}. If a vector $x$ induces a $Q$-basis, then the Ricci curvatures in the directions of the basis vectors are:
\begin{equation}\label{ricc-P}
     r(x)=r(Qx)=r(Q^{2}x)=r(Q^{3}x)=\frac{\tau}{4}+\frac{\tau^{*}}{4}\cos\phi,
\end{equation}
where $\phi=\angle(x, Q^{2}x)$.
\end{theorem}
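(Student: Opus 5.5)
The plan is to use the almost-Einstein form \eqref{rho-AE-P} of the Ricci tensor from Proposition~\ref{prop5-P}, namely $\rho=\frac{\tau}{4}g+\frac{\tau^{*}}{4}\tilde{g}$, and to evaluate it on the vectors of the $Q$-basis via the definition \eqref{Ricicurv}. We have
\begin{equation*}
r(y)=\frac{\tau}{4}+\frac{\tau^{*}}{4}\,\frac{\tilde{g}(y,y)}{g(y,y)}, \qquad y\in\{x,Qx,Q^{2}x,Q^{3}x\}.
\end{equation*}
So the whole task reduces to computing the ratio $\tilde{g}(y,y)/g(y,y)$ for each basis vector.

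First consider $y=x$. By definition, $\tilde{g}(x,x)=g(Px,x)=g(Q^{2}x,x)$. By \eqref{angles-g} this equals $g(x,x)\cos\phi$, with $\phi=\angle(x,Q^{2}x)$. The ratio is therefore $\cos\phi$, and $r(x)=\frac{\tau}{4}+\frac{\tau^{*}}{4}\cos\phi$. Note that the entry $\tilde{g}(x,x)=\cos\phi$ in \eqref{angles-tilde-g} should read $g(x,x)\cos\phi$; I will use the corrected form.

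Next take $y=Q^{k}x$ for $k=1,2,3$. By \eqref{isometry}, $g(Q^{k}x,Q^{k}x)=g(x,x)$. For the other term,
\begin{equation*}
\tilde{g}(Q^{k}x,Q^{k}x)=g(Q^{2}Q^{k}x,Q^{k}x)=g(Q^{k}Q^{2}x,Q^{k}x)=g(Q^{2}x,x).
\end{equation*}
Here I use that $Q$ commutes with $P=Q^{2}$ and that $Q^{k}$ is an isometry. The ratio is again $\cos\phi$, which gives the equal values claimed in \eqref{ricc-P}.

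No step is a real obstacle. The only points needing care are these: all four denominators equal $g(x,x)\neq0$, and the expression for $\tilde{g}(x,x)$ must be normalized consistently. This argument needs nothing about $\varphi$, which matches its absence from the formula.
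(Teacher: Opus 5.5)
Your proposal is correct and follows the paper's proof: both substitute the almost-Einstein form \eqref{rho-AE-P} into \eqref{Ricicurv}, and both use the $Q$-invariance \eqref{isometry} of $g$ to reduce each ratio $\tilde{g}(Q^{k}x,Q^{k}x)/g(Q^{k}x,Q^{k}x)$ to $g(x,Q^{2}x)/g(x,x)=\cos\phi$. Your correction of the entry $\tilde{g}(x,x)=\cos\phi$ in \eqref{angles-tilde-g} to $\tilde{g}(x,x)=g(x,x)\cos\phi$ is right, and it is exactly the normalization the paper's computation implicitly relies on.
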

\begin{proof}
 In the course of the proof of Theorem~\ref{prop5-P}, we find that $\rho$ is given by \eqref{rho-AE-P}. Therefore, using \eqref{isometry}, we obtain
\begin{equation*}
\begin{split}
    \rho(x, x)=\rho(Qx, Qx)=\rho(Q^{2}x, Q^{2}x)&=\rho(Q^{3}x, Q^{3}x)=\frac{\tau}{4}g(x,x)+\frac{\tau^{*}}{4}\tilde{g}(x,x).
    \end{split}
\end{equation*}

Let a vector $x$ induce a $Q$-basis. Then \eqref{isometry}, \eqref{Ricicurv}, \eqref{angles-g}, \eqref{angles-tilde-g}, and the above equality imply
 \eqref{ricc-P}.
\end{proof}

 \begin{theorem}\label{th-ricci}
Let $(M, g, Q)$ be a manifold satisfying property \eqref{3.2}. If a vector $x$ induces a $Q$-basis, then the Ricci curvatures in the direction of the basis vectors are given by:
\begin{equation}\label{ricc-Q}
     r(x)=r(Qx)=r(Q^{2}x)=r(Q^{3}x)=\frac{\tau}{4}(1-\cos\phi),
\end{equation}
where $\phi=\angle(x, Q^{2}x)$.
\end{theorem}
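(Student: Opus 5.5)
The plan mirrors the proof of the preceding theorem for $(M,g,P)$, now using the sharper Ricci formula available under \eqref{3.2}. By Proposition~\ref{prop5-Q} (see \eqref{rho-AE-Q}), condition \eqref{3.2} gives
\[
\rho(x,y)=\frac{\tau}{4}\bigl(g(x,y)-\tilde{g}(x,y)\bigr).
\]
Equivalently, this is \eqref{rho-AE-P} with $\tau^{*}=-\tau$, as recorded in \eqref{rho-tau-Q}. So the statement should follow from that earlier theorem by the substitution $\tau^{*}=-\tau$. I will nonetheless carry out the short direct computation.

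First, I check that $\rho$ is invariant under $Q$. By \eqref{isometry} we have $g(Qx,Qy)=g(x,y)$. Since $PQ=QP$, also
\[
\tilde{g}(Qx,Qy)=g(PQx,Qy)=g(QPx,Qy)=g(Px,y)=\tilde{g}(x,y).
\]
Hence $\rho(Q^{k}x,Q^{k}x)=\rho(x,x)$ and $g(Q^{k}x,Q^{k}x)=g(x,x)$ for $k=0,1,2,3$. By the definition \eqref{Ricicurv}, this gives $r(x)=r(Qx)=r(Q^{2}x)=r(Q^{3}x)$.

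Next, I compute $r(x)=\rho(x,x)/g(x,x)$. This requires $\tilde{g}(x,x)=g(Px,x)=g(Q^{2}x,x)=g(x,x)\cos\phi$. This is the content of \eqref{angles-g}; I read \eqref{angles-tilde-g} as intending $\tilde{g}(x,x)=g(x,x)\cos\phi$, with a missing factor $g(x,x)$.

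Substituting gives $\rho(x,x)=\frac{\tau}{4}g(x,x)(1-\cos\phi)$. Dividing by $g(x,x)$ yields \eqref{ricc-Q}.

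There is no real obstacle here. The only point needing care is the normalization of $\tilde{g}(x,x)$, so that the factor $g(x,x)$ cancels correctly against the denominator in \eqref{Ricicurv}.
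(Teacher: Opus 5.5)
Your proposal is correct and follows the paper's proof. Both read off $\rho(x,x)=\frac{\tau}{4}\bigl(g(x,x)-\tilde g(x,x)\bigr)$ from \eqref{rho-AE-Q}, use the $Q$-invariance of $g$ and $\tilde g$ to equate the four Ricci curvatures, and substitute $\tilde g(x,x)=g(x,x)\cos\phi$; your minus sign and your reading of \eqref{angles-tilde-g} are the right ones, since the paper's intermediate display has the typo $+\tilde g(x,x)$ and \eqref{angles-tilde-g} drops the factor $g(x,x)$.
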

\begin{proof}
 In this case $\rho$ is given by \eqref{rho-AE-Q}. Therefore, using \eqref{isometry} and \eqref{rho-AE-Q}, we obtain
\begin{equation*}
\begin{split}
    \rho(x, x)=\rho(Qx, Qx)=\rho(Q^{2}x, Q^{2}x)=\rho(Q^{3}x, Q^{3}x)=\frac{\tau}{4}(g(x,x)+\tilde{g}(x,x)).
    \end{split}
\end{equation*}
Let a vector $x$ induce a $Q$-basis. Applying equalities \eqref{isometry}, \eqref{angles-g}, \eqref{angles-tilde-g}, and the above equality to \eqref{Ricicurv} we find \eqref{ricc-Q}.
\end{proof}
\begin{rem}
If the $Q$-basis is orthonormal for a manifold with a parallel structure $Q$ on $(M, g, Q)$ (or with a parallel structure $P$ on $(M, g, P)$), then the Ricci curvatures reduce to $r(x)=r(Qx)=r(Q^{2}x)=r(Q^{3}x)=\frac{\tau}{4}$.
\end{rem}

\subsection{Examples of manifolds $(M, g, Q)$}\label{s4.1}
In this section, we construct a non-trivial example of a manifold $(M, g, Q)$.

Let $\mathbb{E}^{10}$ be the ten-dimensional Euclidean space $(\mathbb{R}^{10}, <\cdot,\cdot>)$ equipped with the standard inner product $$<x,y>=\sum_{i=\overline{1,10}} x^{i}y^{i},$$
where $x(x^{i})$ and $y(y^{i})$ are vectors in $\mathbb{R}^{10}$.

We consider a four-dimensional hypersurface $S\in\mathbb{E}^{10}$ given by the para\-metrization 
\begin{equation}\label{surf}
\begin{split}
S:\ r \big(\Phi\sin\alpha, & \Phi\cos\alpha, \Phi\sin\beta, \Phi\cos\beta, \Phi\sin\gamma, \Phi\cos\gamma, u, v, w, t \big), 
\end{split}
\end{equation}
where $\Phi=\exp(u+v+w+t)$, $\alpha=u+w$, $\beta=v+t$, $\gamma=u+v+w+t$. 

The coefficients of the first fundamental form of $S$ are found to be: 
\begin{equation}\label{prw0}
\begin{split}
    A= 5\Phi^{2}+1,\quad B= 4\Phi^{2},\quad C= 5\Phi^{2}.
    \end{split}
\end{equation}
Evidently, the inequalities  $A>C>B>0$ hold true. 
Therefore the manifold $(S, g, Q)$, whose metric $g$ is determined by these coefficients and whose structure $Q$ is given by \eqref{Q}, belongs to the class of manifolds $(M, g, Q)$ under consideration.

Direct verification shows that these functions and their derivatives satisfy equalities in \eqref{w0}, but fail to satisfy \eqref{paralel-f4}. 
Thus, we establish the following statement:
\begin{prop}\label{kt2}
 The manifold $(S, g, P)$ with a structure $P=Q^{2}$ and the metric $g$ defined by the components \eqref{prw0} is a locally product Riemannian manifold.
\end{prop}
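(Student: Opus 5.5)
The plan is to verify directly the criterion for local product structure recalled in Section~\ref{sec:1}. By that criterion, $(M,g,P)$ belongs to the class $W_{0}$, i.e.\ $\nabla P=0$ as in \eqref{keler}, if and only if the partial derivatives of $A$, $B$, $C$ satisfy the six equalities \eqref{w0}. So I only need to compute these derivatives for the coefficients \eqref{prw0} and check \eqref{w0}.

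First I confirm \eqref{prw0} from the parametrization \eqref{surf}. Write $\Phi=e^{\gamma}$ with $\gamma=u+v+w+t$, so that $\partial_{i}\Phi=\Phi$ for every coordinate. Each pair, e.g.\ $(\Phi\sin\alpha,\Phi\cos\alpha)$, contributes $\Phi^{2}(1+\alpha_{i}\alpha_{j})$ to $g_{ij}$, because the cross terms cancel. The linear coordinates $u,v,w,t$ contribute $\delta_{ij}$. The phase derivatives are $\alpha_{i}=(1,0,1,0)$, $\beta_{i}=(0,1,0,1)$ and $\gamma_{i}=(1,1,1,1)$. Summing gives
\[
g_{11}=\Phi^{2}(3+1+0+1)+1=5\Phi^{2}+1, \qquad g_{12}=\Phi^{2}(3+0+0+1)=4\Phi^{2},
\]
\[
g_{13}=\Phi^{2}(3+1+0+1)=5\Phi^{2}.
\]
The remaining entries follow the circulant pattern \eqref{metricg}. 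This reproduces \eqref{prw0}, and $A>C>B>0$ is clear.

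Next I compute derivatives. Since $\partial_{i}\Phi^{2}=2\Phi^{2}$ for each $i$,
\[
A_{i}=C_{i}=10\Phi^{2}, \qquad B_{i}=8\Phi^{2}, \qquad i=1,\dots,4.
\]
Then every equation in \eqref{w0} reduces to an identity: $A_{3}=C_{1}$, $A_{1}=C_{3}$, $A_{2}=C_{4}$ and $A_{4}=C_{2}$ all read $10\Phi^{2}=10\Phi^{2}$, while $B_{3}=B_{1}$ and $B_{4}=B_{2}$ read $8\Phi^{2}=8\Phi^{2}$. Hence $\nabla P=0$, and $(S,g,P)$ is a locally product Riemannian manifold.

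For non-triviality I also check that \eqref{paralel-f4} fails. Its condition $2B_{2}=C_{1}+C_{3}$ would require $16\Phi^{2}=20\Phi^{2}$, which is false since $\Phi\neq0$. So $Q$ is not parallel, and the example lies in $W_{0}$ for $P$ without $\nabla Q=0$.

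The only real work is the first-fundamental-form computation: keeping track of which phases depend on which variables and confirming that the $\sin/\cos$ cross terms cancel. Everything after that is immediate.
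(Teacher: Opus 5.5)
Your proposal is correct and follows the paper's argument: a direct check that the coefficients \eqref{prw0} satisfy the criterion \eqref{w0} for $\nabla P=0$, together with the observation that \eqref{paralel-f4} fails, so $Q$ itself is not parallel. Your recomputation of the first fundamental form and of the derivatives $A_{i}=C_{i}=10\Phi^{2}$, $B_{i}=8\Phi^{2}$ is accurate, and it simply spells out what the paper calls ``direct verification''.
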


\begin{rem} Geometrically, the four-dimensional hypersurface decomposes into a product of two independent two-dimensional surfaces, $S = S_1\times S_2$. The first quadratic form of $S_1$ is given by $$\di s^{2}_{1}=\frac{1}{2}\di ^{2}U+\frac{1}{2}\di ^{2}V,$$ where $U=u-w$ and $V=v-t$. 

The first quadratic form of $S_2$ is $$\di s^{2}_{2}=\left(\frac{1}{2}+5\Phi_{2}\right)\di ^{2}X+8\Phi^{2}\di X\di Y+\left(\frac{1}{2}+5\Phi_{2}\right)\di ^{2}Y,$$ where $X=u+w$, $Y=v+t$. 

The first of these surfaces is Euclidean, whereas the second one is non-trivial, with a non-constant negative sectional curvature $K=\frac{-4\Phi^{2}}{(9\Phi^{2}+\frac{1}{2})^{2}}$.
 \end{rem}

 \begin{rem} To partially visualise the hypersurface \eqref{surf}, we use an orthogonal projection and fix two of the coordinates, for example, let $\alpha=u+w$ and $\beta=w+t$. Then, the symmetrical projection of $S$ onto the coordinate hyperplane $OXYZ$ yields the surface $S_1$ defined by the parametric equations:\begin{equation*}
\begin{split} S_1:\ X=&\exp(\alpha+\beta)\cos\alpha,\ Y=\exp(\alpha+\beta)\cos\beta,\\ Z=&\exp(\alpha+\beta)\cos(\alpha+\beta).\end{split}
\end{equation*}
 Its visual representation is shown in Figure~\ref{fig1}.
 \end{rem}
\begin{rem} Another orthogonal projection of the hypersurface \eqref{surf} is obtained by fixing the coordinate $u$ and a new variable $\alpha$ (which depends on $v$, $w$, and $t$). In this case, the symmetrical projection of $S$ onto the coordinate hyperplane $OXYZ$ results in the surface $S_2$ with the parametric equations:\begin{equation*}
\begin{split} S_2:\ X=\exp(u)\sin\alpha,\ Y=\exp(u)\cos\alpha,\ Z=u.\end{split}
\end{equation*}
 Its graph is illustrated in Figure~\ref{fig2}.
 \end{rem}

\begin{figure}[H]
\centering
\includegraphics[width=0.4\paperwidth]{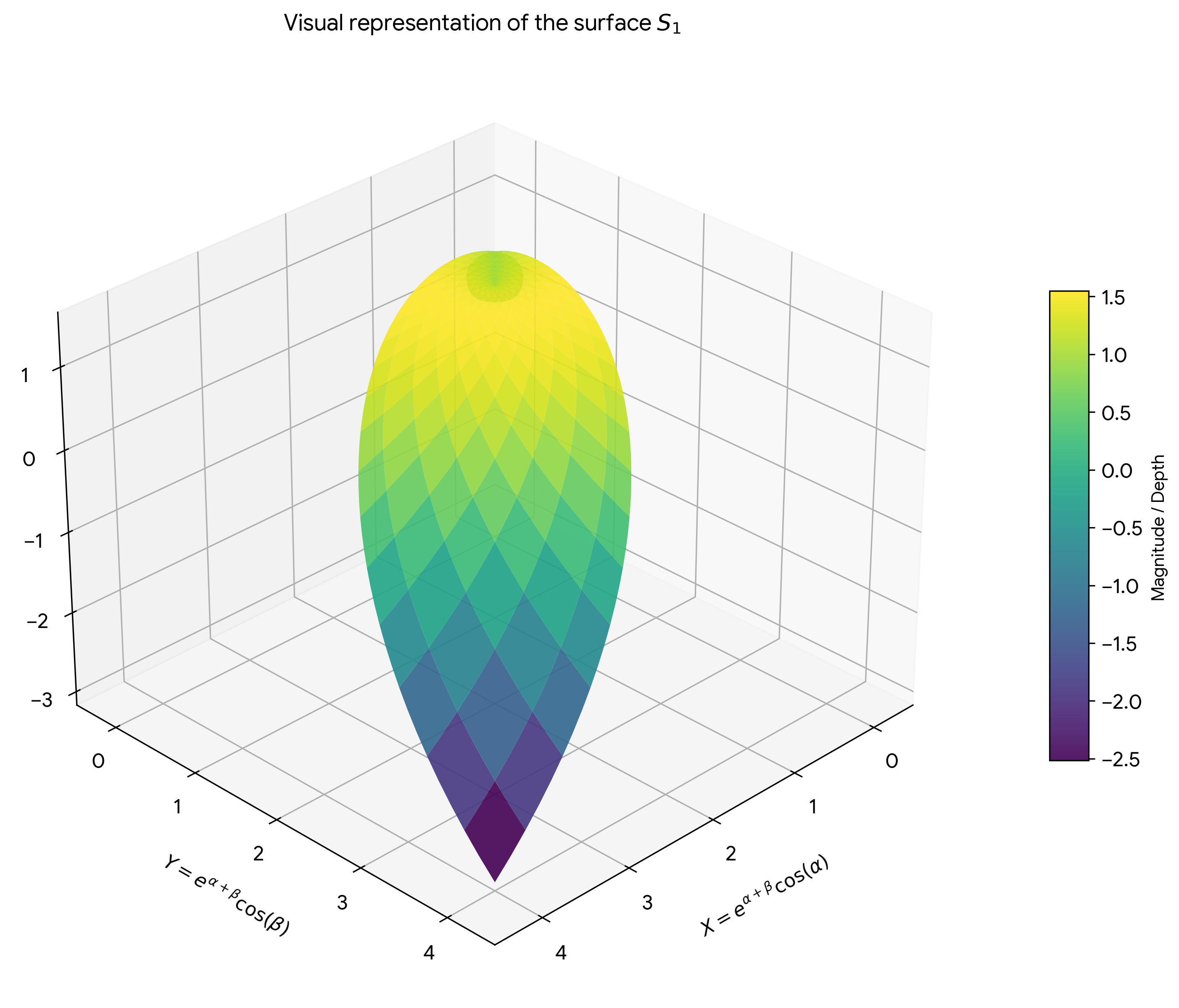}
\caption{Visual representation of the non-trivial surface curvature. The plot was generated using standard Python libraries (matplotlib).} 
\label{fig1}
\end{figure}
\begin{figure}[H]
\includegraphics[width=0.4\paperwidth]{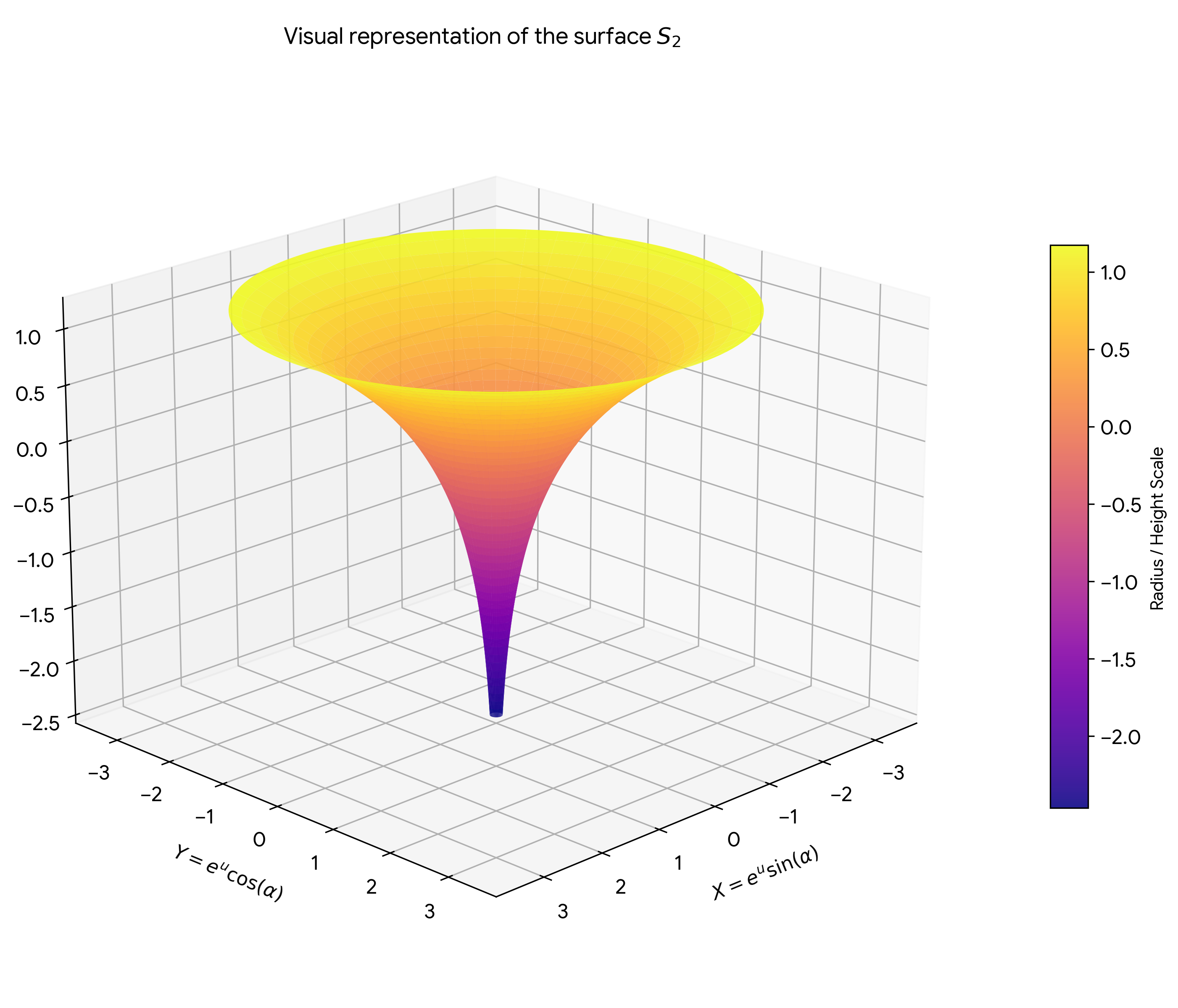}
\caption{Visual representation of the non-trivial surface curvature. The plot was generated using standard Python libraries (matplotlib).} 
\label{fig2}
\end{figure}

\section{Conclusion}
In this work, we have investigated the geometric properties of a structure $Q$ that induces an almost-product structure $P$ on a 4-dimensional Riemannian manifold $(M, g, Q)$. We successfully identified a specific subclass of Riemannian almost-product manifolds characterized by a Riemannian $P$-curvature tensor, and subsequently derived their curvature properties. Additionally, a non-trivial example of a 4-dimensional locally product Riemannian manifold embedded in a 10-dimensional Euclidean space, was constructed.

Future research will focus on finding a non-trivial example of a Riemannian manifold with a parallel structure $Q$.


\begin{thebibliography}{99}

\bibitem{asmari} N. M. AL-Asmari, S. Uddin, M. F. Naghi, \emph{Sequential warped product submanifolds in locally product Riemannian manifolds}, Filomat \textbf{37} (2023), no. 27, 9259--9271. 

\bibitem{dokuzova} I. Dokuzova, \emph{Four-dimensional Riemannian product manifolds with circulant structures}, Stud. Univ. Babe\c{c}-Bolyai Math. \textbf{68} (2023), no. 2, 439--448.

\bibitem{dokuzova1}
I. Dokuzova,  \emph{Curvature properties of $4$-dimensional Riemannian manifolds with a circulant structure}, J. Geom. {\bf 108} (2017), no. 2, 517--527.

 \bibitem{dokuzova2}
I. Dokuzova,  \emph{Almost conformal transformation in a four-dimensional Riemannian manifold with an additional structure}, Int. Electron. J. Geom. {\bf 6} (2013), no. 2, 9--13.


\bibitem{esra}
E. Erkan, K. Takano, M. G\"{u}lbahar \emph{Locally product-like statistical manifolds and their hypersurfaces}, Int. Electron. J. Geom.  \textbf{16} (2023), no. 2,  435--450. 


\bibitem{gribacheva2}
D. Gribacheva, \emph{Curvature properties of two Naveira classes of Riemannian product manifolds}, Plovdiv. Univ. Paisi\v{\i} Khilendarski Nauchn. Trud. Mat. \textbf{39} (2012), no. 3,  31--42. Preprint arXiv:1204.5838


\bibitem{griba-mek}
D.~Gribacheva, D.~Mekerov, \emph{Natural connections on conformal Riemannian $P$-manifolds}, C. R. Acad. Bulgare Sci.  {\bf 65} (2012), no. 5, 581--590.

\bibitem{griba-mek2}
D.~Gribacheva, D.~Mekerov, \emph{Conformal Riemannian $P$-manifolds with connections whose curvature tensors are Riemannian $P$-tensors}, J. Geom.  {\bf 105} (2014), no. 2, 273--286.

\bibitem{jack}
J. C.M. Hughes, J. F. J. Beek, F. V. Kusmartsev, \emph{Warped product Einstein manifolds in four dimensions}, J. Geom. Phys., \textbf{230} (2026), paper no.105987.

\bibitem{nav}
A. M. Naveira,  \emph{A classification of Riemannian almost product manifolds},  Rend. Mat. (7) \textbf{3} (1983), no. 3, 577--592.


\bibitem{pusic}
 N. Pu\v{s}i\'{c}, \emph{On some connections on locally product Riemannian manifolds - Part I}, Novi Sad J. Math. \textbf{41} (2011), no. 2, 29--40.


\bibitem{raz}
D. Razpopov, \emph{Four-dimensional Riemannian manifolds with two circulant structures},  Math. Educ. Math., {\bf 44} (2015), In: Proc. of $44$-th Spring Conf. of UBM, SOK Kamchia, Bulgaria, 179--185.

\bibitem{razpopov2} D. Razpopov, G. Dzhelepov, \emph{Curvature properties of Riemannian manifolds with circulant structures}, Adv. Math. Sci. J. \textbf{9}  (2020), no. 1, 37--47.

\bibitem{sta}
M.~Staikova, K.~Gribachev, \emph{Canonical conections and their conformal invariants on Riemannian $P$-manifolds}, Serdica, vol.~18 (1992), 150--161.

\bibitem{sta2}
M.~Staikova, K.~Gribachev, D. Mekerov, \emph{Riemannian $P$-manifolds of constant sectional curvatures}, Serdica, \textbf{17} (1991), no. 4, 212--219.

\bibitem{yano}
K. Yano, Differential geometry on complex and almost complex spaces. Pure and Applied Math. \textbf{49},  New York, Pergamont Press Book, 326,
(1965).
 

%
\end{thebibliography}
\end{document}